\documentclass[11pt]{amsart}
\usepackage[utf8]{inputenc}
\usepackage{amsmath, amsthm, amssymb, bbm, bm, amsfonts}
\usepackage[colorinlistoftodos]{todonotes}
\usepackage{caption}
\usepackage[english]{babel}
\usepackage{hyperref}

\newcommand{\genleg}[2]{
    \genfrac{(}{)}{}{}{#1}{#2}
}
\newcommand{\Mod}[1]{\ (\mathrm{mod}\ #1)}
\newcommand{\Li}{\text{Li}}

\def \N {{\mathbb N}}

\def \Z {{\mathbb Z}}
\def \Q {{\mathbb Q}}
\newtheorem{theorem}{Theorem}[section]

\newtheorem{lemma}[theorem]{Lemma}
\newtheorem{remark}[theorem]{Remark}

\newtheorem{conjecture}[theorem]{Conjecture}

\newtheorem{cor}[theorem]{Corollary}
\numberwithin{equation}{section}
 
\title[Artin Twin Primes]{Artin Twin Primes}
\author{Magdal\'ena Tinkov\'a, Ezra Waxman, Mikul\'a\v{s} Zindulka}
\date{\today}

\address{Charles University, Faculty of Mathematics and Physics, Department of Algebra,
Sokolovsk\'{a} 83, 18600 Praha 8, Czech Republic}
\email{tinkova.magdalena@gmail.com}

\address{Technische Universität Dresden, Fakultät Mathematik, Institut für Algebra, Zellescher Weg 12-14, 01062 Dresden, Germany}
\email{ezrawaxman@gmail.com}

\address{Charles University, Faculty of Mathematics and Physics, Department of Mathematical Analysis,
Sokolovsk\'{a} 83, 18600 Praha 8, Czech Republic}
\email{zinmik2@gmail.com}

\begin{document}

\maketitle

\begin{abstract}
We say that a prime number $p$ is an \textit{Artin prime} for $g$ if $g$ mod $p$ generates the group $(\mathbb{Z}/p\mathbb{Z})^{\times}$.  For appropriately chosen integers $d$ and $g$, we present a conjecture for the asymptotic number $\pi_{d,g}(x)$ of primes $p \leq x$ such that both $p$ and $p+d$ are Artin primes for $g$.  In particular, we identify a class of pairs $(d,g)$ for which $\pi_{d,g}(x) =0$.  Our results suggest that the distribution of Artin prime pairs, amongst the ordinary prime pairs, is largely governed by a Poisson binomial distribution.
\end{abstract}

\section{Introduction}
\subsection{Background}
Fix an odd prime $p \in \mathbb{N}$, and let $(\Z/p\Z)^{\times}$ denote the finite cyclic group of order $p-1$.  We say that $g \in \Z$ is a \textit{primitive root} mod $p$ (denoted $ord_{p}(g)=p-1$) if $g$ mod $p$ generates the group $(\Z/p\Z)^{\times}$.  Let $\mathcal{P}$ denote the set of odd primes, and let $\mathcal{P}_{g} \subseteq \mathcal{P}$ denote the subset of primes $p$ for which $ord_{p}(g)=p-1$.  If $p \in \mathcal{P}_{g}$, we say that $p$ is an \textit{Artin prime} (for the root $g$).  \textit{Artin's primitive root conjecture} states that if $g$ is neither a perfect square nor $-1$, then $\mathcal{P}_{g}$ contains infinitely many primes.\\
\\
Fix an integer $g \neq -1$ such that $g$ is not a perfect square.  Let $h \in \mathbb{N}$ denote the largest integer such that $g = b^{h}$ for some $b \in \Z$, and let $\Delta$ denote the discriminant of $\Q(\sqrt{g})$.  Assuming the \textit{Generalized Riemann Hypothesis} (GRH) for Dedekind zeta functions, Hooley \cite{Ho} demonstrated that the \textit{natural density} of $\mathcal{P}_{g}$ in $\mathcal{P}$ is equal to $c_{g} \cdot A(h)$, i.e. that
\begin{equation}\label{Hooley}
\lim_{x \rightarrow \infty}\frac{\# \{p \in \mathcal{P}_{g}: p \leq x\}}{\# \{p \in \mathcal{P}: p \leq x\}} = c_{g}\cdot A(h),
\end{equation}
where
\[A(h):=\prod_{\substack{p\nmid h\\ p \textnormal{ prime}}}\left(1-\frac{1}{p(p-1)}\right)\prod_{\substack{p|h \\ p \textnormal{ prime}}}\left(1-\frac{1}{p-1}\right)\]
and 
\begin{equation}\label{classical correction factor}
c_{g}:=1-\mu(|\Delta|)\prod_{\substack{q|\Delta\\ q|h}}\frac{1}{q-2}\prod_{\substack{q|\Delta\\ q\nmid h}}\frac{1}{q^2-q-1}
\end{equation}
is a correction factor.  In particular, $c_{g}=1$ whenever $\Delta \not \equiv 1(4)$.\\
\\
The correction factor $c_{g}$ has an interesting history \cite{Stevenhagen}.  Initially, E. Artin (1927) actually claimed that the natural density of $\mathcal{P}_{g}\subseteq \mathcal{P}$ is equal to $A(h)$ for \textit{every} integer $g \neq -1$ that is not a square.  Such a model is based on the assumption that any two field extensions of the form $K_{q}=\Q(\zeta_{q},\sqrt[q]{g})$, $q$ prime, are linearly disjoint over $\Q$.  In 1957, Derrick and Emma Lehmer brought to Artin's attention the need for an additional correction factor, by noting numerical deviations from Artin's predicted asymptotic in the case $g=5$.  In a reply letter to the Lehmers, Artin suggested a modified heuristic that could correct for the `entanglement' of the corresponding splitting fields.  The adjusted heuristic could better explain the newly observed numerical data, and Artin concluded his letter by noting that he had been ``careless but the machine caught up with me."  In 1965, Lang and Tate eventually listed the full correction factor, $c_{g}$, in their preface to Artin's collected works, and this corrected conjecture was then proved by Hooley two years later, under GRH.  For a derivation of $c_{g}$ (as well as more general Artin-type correction factors) in terms of character sums, see \cite{LSM}.
\subsection{Modeling Artin Primes}
The group $(\Z/p\Z)^{\times}$ has $p-1$ elements and $\varphi(p-1)$ generators, where $\varphi$ denotes Euler's totient function.  The fraction $\varphi(p-1)/p-1$ may thus be viewed as the ``probability" that $g$ is a primitive root mod $p$, and the quantity 
\[\mathcal{P}_{g}(x):=\# \{p \in \mathcal{P}_{g}: p \leq x\}\]
naively modeled by the sum
\[W_{g}(x)=\sum_{\substack{p \leq x\\ p \textnormal{ prime}}}\frac{\varphi(p-1)}{p-1}.\]
Indeed,
\[W_{g}(x) \sim A(1) \textnormal{Li}(x),\]
i.e. the above heuristic yields the correct asymptotic whenever $h=1$ and $c_{g}=1$.\\
\\
To improve upon this model, we make two simple observations.  First, a necessary condition for $p$ to lie in $\mathcal{P}_{g}$ is for $(p-1,h)=1$.  Indeed, if $(p-1,h)>1$, then 
\begin{equation}
g^{\frac{p-1}{(p-1,h)}}\equiv \left(b^{\frac{h}{(p-1,h)}}\right)^{p-1}\equiv 1 \Mod{p},
\end{equation}
and therefore $p \not \in \mathcal{P}_{g}$.  This corresponds to the necessary adjustment of the main term, $A(h)$.  Second, a further necessary condition for $p$ to lie in $\mathcal{P}_{g}$ is for $g$ to be a non-square mod $p$, i.e. for $(g/p)=-1$, where $(g/p)$ is the Legendre symbol.  This corresponds to the additional correction factor, $c_{g}$.\\
\\
If the above two conditions are met, then $g$ lies in one of $(p-1)/2$ specified residue classes in $(\Z/p\Z)^{\times}$, of which $\varphi(p-1)$ are primitive roots. Such considerations motivated Moree \cite{Mo1, Mo2} to propose the following general model to mimic the properties of Artin primes.  Fix an integer $g \neq -1$ such that $g$ is not a perfect square.  For $p \in \mathcal{P}$, define
\begin{align}\label{wp}
w_{g}(p) &:= \left\{
\begin{array}{l l l}
2 \frac{\varphi(p-1)}{p-1}& \text{ if } \left(\frac{g}{p}\right) = -1 \textnormal{ and } (p-1,h)=1\\
0 & \textnormal{ otherwise}.
\end{array} \right.
\end{align}
Suppose $S \subseteq \mathcal{P}$ has a non-zero natural density, and let 
\[N_{S}(x):=\{p \leq x: p \in S\}\]
denote the set of primes $p \leq x$ contained in $S$. To each $p_{i}\in S$, we assign a Bernoulli distributed random variable $E_{p_{i}}$, such that $E_{p_{i}} = 1$ occurs with probability $w_{g}(p_{i})$, and $E_{p_{i}} = 0$ occurs with probability $1-w_{g}(p_{i})$.  For fixed $x$, we then consider the random variable
\[X_{g,N} = \sum_{\substack{p_{i} \leq x \\ p_{i} \in S}}E_{p_{i}}\]
representing the number of successes amongst a sequence of $N:=|N_{S}(x)|$ trials, $\{E_{p_{i}}\}_{i=1}^{N}$. As $X_{g,N}$ is a finite sum of independent Bernoulli distributed random variables, it has a \textit{Poisson binomial distribution}, with mean value
\[\mu_{g,N}=\sum_{\substack{p \leq x \\ p \in S}}w_{g}(p)\]
and variance
\[\sigma^{2}_{g,N}=\sum_{\substack{p \leq x \\ p \in S}}(1-w_{g}(p))w_{g}(p).\]

If the elements in $\mathcal{P}_{g}$ are well distributed across admissible residue classes, one expects
\[\#\{p: p \leq x, p \in \mathcal{P}_{g} \cap S\}\]
to look like a ``typical" instance of $X_{g,N}$.
\subsection{Artin Primes in Arithmetic Progressions}
As an example, set 
\[S=\{p:p \equiv a\Mod f\},\] where $(a,f)=1$, and let
\[\mathcal{P}_{g}(x;f,a):=\#\{p: p\leq x, p \equiv a \Mod f, \text{ and } p \in \mathcal{P}_{g} \}.\]
For simplicity sake, we moreover assume that $h=1$, i.e. that $g \in \mathcal{H}_{1}$, where
\[\mathcal{H}_{1}:= \{g \in \Z: g \neq -1, g \textnormal { is not a perfect power}\}\subset \Z.\]
 Under GRH, 
\begin{equation}\label{prob model}
\mathcal{P}_{g}(x;f,a) = \textbf{A}(a,f)\textbf{c}_{g}(a,f)\frac{x}{\log x}+O\left(\frac{x \log \log x}{\log 2x}\right)
\end{equation}
where
\begin{equation}
\textbf{A}(a,f):= \frac{1}{\varphi(f)}\prod_{p|(a-1,f)}\left(1-\frac{1}{p}\right)\prod_{p \nmid f}\left(1-\frac{1}{p(p-1)}\right)
\end{equation}
and $\textbf{c}_{g}(a,f)$ is a correction factor that takes into account quadratic obstructions \cite{Le, Stevenhagen}.  In particular, $\textbf{c}_{g}(a,f)=0$ if and only if $\Delta|f$ and $(\Delta/a)=1$, where $(a/b)$ denotes the Kronecker symbol.\\
\\
The probabilistic model presented above suggests that in the limit as $x \rightarrow \infty$,
\begin{equation}\label{prob model}
\mathcal{P}_{g}(x;f,a) \sim W_{g,S}(x):=\sum_{\substack{p \leq x \\ p \equiv a(\text{mod } f)}}w_{g}(p).
\end{equation}
Indeed, Moree \cite{Mo1, Mo2} showed that
\[W_{g,S}(x) \sim \textbf{A}(a,f)\textbf{c}_{g}(a,f)\frac{x}{\log x},\]
i.e. that the model provides an exact asymptotic heuristic for $\mathcal{P}_{g}(x;f,a)$, under GRH.  Variants of Moree's model moreover provide exact asymptotic heuristics for a wide class of problems related to Artin's conjecture, such as an asymptotic count for the number of primes $p \leq x$ for which $ord_{p}(g)$ is even \cite{Ciolan, Moree3} (unconditional) or for which $g$ is a near primitive root \cite{Moree4, Moree5} (conditional on GRH).  Such models are related to observations suggesting that the distribution of primitive roots across admissible residue classes modulo a large fixed prime $p \in \mathcal{P}$ is largely governed by Poisson processes \cite{Cobeli, Rudnick}.
\subsection{A Hardy-Littlewood Conjecture for Artin Primes}
Let $\bm{n}=(n_{1},\dots,n_{k})$ denote an integer $k$-tuple, i.e. an ordered set of $k$ integers, and write $\bm{n} \leq x$ if $n_{i} \leq x$ for all $1 \leq i \leq k$.\\
\\
Fix $\bm{d}=(d_{1},\dots,d_{k}) \in (\Z_{\geq 0})^{k}$ to be a $k$-tuple of even integers.  We say that $\bm{d}$ is \textit{admissible} if it does not include the complete residue system of any prime $p$.  Let
\[\bm{D}:=\{\bm{n} \in \mathbb{N}^{k}: n_{i}= n+d_{i} \text{ for all } 1 \leq i \leq k, \text{ for some odd }n \in \mathbb{N} \}\]
denote the set of (odd) integer $k$-tuples with spacing $\bm{d}$.  Define the \textit{prime $k$-tuples} (of spacing $\bm{d}$) to be the collection
\[\pi_{\bm{d}}:=\{\bm{p} \in \bm{D} : p_{i}=n+d_{i} \in \mathcal{P} \text{ for all } 1 \leq i \leq k\}.\]
In 1922, Hardy and Littlewood \cite{HL} conjectured that in the limit as $x \rightarrow \infty$,
\[\#\{\bm{p}: \bm{p} \leq x \text{ and }\bm{p} \in \pi_{\bm{d}}\} = \mathfrak{S}(\bm{d})\cdot \frac{x}{(\log x)^{k}}\left(1+o(1)\right),\]
where $\mathfrak{S}(\bm{d})$ is the \textit{singular series}
\[\mathfrak{S}(\bm{d}):= \prod_{p}\frac{1-\omega_{\bm{d}}(p)/p}{(1-1/p)^{k}},\] 
and where $\omega_{\bm{d}}(p)$ counts the number of distinct residue classes amongst $d_{1},\dots, d_{k}$ modulo $p$.  Note that $\mathfrak{S}(\bm{d})>0$ for any admissible $\bm{d}$.\\
\\
We wish to study subsets of $\pi_{\bm{d}}$ with fixed primitive roots.  Fix $\bm{d}=(d_{1},\dots ,d_{k})$ to be an admissible $k$-tuple, and $\bm{g}=(g_{1},\dots,g_{k}) \in (\mathbb{Z} \setminus \{-1\})^{k}$ to be an integer $k$-tuple such that $g_{i}$ is not a perfect square for any $1 \leq i \leq k$.  We define the set of \textit{Artin prime $k$-tuples} (of spacing $\bm{d}$ with respect to $\bm{g}$) to be
\[\pi_{\bm{d},\bm{g}}:=\{\bm{p} \in \bm{D}: p_{i} =n+d_{i} \in \mathcal{P}_{g_{i}} \text{ for all } 1 \leq i \leq k\} \subseteq \pi_{\bm{d}}.\]
To model this set quantitatively, we employ a variation of the heuristic function $w_{g}(p)$ defined above.  Specifically, for $\bm{p} \in \pi_{\bm{d}}$, define
\[w_{\bm{g}}(\bm{p}) := w_{g_{1}}(p_{1})\cdots w_{g_{k}}(p_{k}).\]
To each $\bm{p}_{i} \in \pi_{\bm{d}}$, we assign a Bernoulli distributed random variable $E_{\bm{p}_{i}}$, such that $E_{\bm{p}_{i}}=1$ with probability $w_{\bm{g}}(\bm{p}_{i})$.  We then model $\pi_{\bm{d},\bm{g}}$ by a Poisson binomial distribution indexed by the events $\{E_{\bm{p}_{i}}\}_{\bm{p}_{i} \in \pi_{\bm{d}}}$.  Specifically, define
\[\pi_{\bm{d},\bm{g}}(x):=\#\{\bm{p}: \bm{p} \leq x \textnormal{ and }\bm{p} \in \pi_{\bm{d},\bm{g}}\}.\]
If $\pi_{\bm{d},\bm{g}}(x)$ is unbounded, we expect that in the limit as $x \rightarrow \infty$,
\begin{equation}\label{Full Hardy Littlewood Artin Primes}
\pi_{\bm{d},\bm{g}}(x) \sim W_{\bm{g},\bm{d}}(x):=\sum_{\substack{\bm{p} \leq x \\ \bm{p} \in \pi_{\bm{d}}}}w_{\bm{g}}(\bm{p}).
\end{equation}
\subsection{Artin Prime Pairs}
In this paper, we use the heuristic model suggested in (\ref{Full Hardy Littlewood Artin Primes}) to study the asymptotic growth of \textit{Artin Prime Pairs}, i.e. $\pi_{\bm{d},\bm{g}}$ when $k=2$, $\bm{d}=(0,d)$ and $\bm{g}=(g,g)$.  Specializing our notation, we let
\[\pi_{d}:=\{p: p,p+d \in \mathcal{P}\}\]
denote the set of prime gaps of size $d \in 2\N$, and
\[\pi_{d,g}:=\{p: p,p+d \in \mathcal{P}_{g}\}\]
denote the set of $p \in \pi_{d}$ such that $g$ is a primitive root modulo both $p$ and $p+d$.  Under the assumption of GRH, Pollack \cite{BakerPollack, Pollack} showed the existence of some $C > 0$ such that $\pi_{d,g}$ is an infinite set for some $d \leq C$.\\
\\
Fix $d \in 2\N$.  Henceforth, we will moreover assume $g \in\mathcal{H}_{1}$.  For $p \in \pi_{d}$, define 
\[\Delta_{d,g}(p) := \left\{
\begin{array}{l l}
1 & \text{ if } \genleg{g}{p}=\genleg{g}{p+d}=-1, \\
0 & \text{ otherwise}.
\end{array} \right.\]
Note that since $p \in \mathcal{P}_{g} \Rightarrow \genleg{g}{p} = -1$, it follows that $\Delta_{d,g}(p)=0 \Rightarrow p \not \in \pi_{d,g}.$  In particular, if $\Delta_{g,d}(p)=0$ for all $p \in \pi_{d}$, then $\pi_{d,g}$ is an empty set.  Upon identifying a set of pairs $(g,d)$ such that $\Delta_{g,d}(p)=0$ for all $p \in \pi_{d}$, we obtain the following theorem:
\begin{theorem} \label{Finite Theorem}
Fix $d \in 2\N$ and $g \in \mathcal{H}_{1}$, and write $g = g_{0}g_{s}^2$, where $g_{0}$ is square-free. Then $\pi_{d,g}$ is empty if the pair $(d,g)$ satisfies any of the following conditions:
\begin{enumerate}
\item $g_0 = 5 $ and $ d \equiv 2, 3 \pmod{5} $,
\item $g_0| d $, and either
    \begin{enumerate}
 \item $ d \equiv 2 \pmod{4} $ and $g_0 \equiv 3 \pmod{4} $,
\item $ d \equiv 4 \pmod{8} $ and $g_0 \equiv 2 \pmod{4} $,
    \end{enumerate}
\item $g_0 \nmid d$, $g_0 \mid 3d$, and either
    \begin{enumerate}
        \item $g_0 \equiv 1 \pmod{4} $
    \item $g_0 \equiv 3 \pmod{4}$ and $ d \equiv 0 \pmod{4} $,
    \item $g_0 \equiv 2 \pmod{4} $ and $ d \equiv 0 \pmod{8} $.
    \end{enumerate}
\end{enumerate}
\end{theorem}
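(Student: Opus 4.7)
The plan is to reduce the condition $[d,g]=0$ to a character-theoretic statement on residue classes, and then enumerate obstructions by case analysis; the periodicity of $\Delta_{d,g}$ will imply that either $\Delta_{d,g}\equiv 0$ on $\N$ or $\Delta_{d,g}$ has infinite support, which yields the ``in particular'' statement since every $p\in\pi_{d,g}$ satisfies $\Delta_{d,g}(p)=1$.

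First I would strip the square part. Since $\genleg{g_{s}^{2}}{n}\in\{0,1\}$, one has $\genleg{g}{n}=-1$ if and only if $\chi(n):=\genleg{g_{0}}{n}=-1$ and $\gcd(n,g_{s})=1$. By quadratic reciprocity, $\chi$ is a real Dirichlet character modulo $M$, where $M=|g_{0}|$ if $g_{0}\equiv 1\pmod{4}$ and $M=4|g_{0}|$ otherwise. Thus $\Delta_{d,g}$ is periodic, and $[d,g]=0$ is equivalent to the nonexistence of an odd residue $n\pmod{M}$ with $\chi(n)=\chi(n+d)=-1$; coprimality with the primes of $g_{s}$ not dividing $g_{0}$ is then easily arranged within any such residue class by CRT, since those primes are coprime to $M$.

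The core of the argument is a case analysis. Factor $\chi=\chi_{\mathrm{odd}}\cdot\chi_{2}$ via CRT, where $\chi_{\mathrm{odd}}$ depends on $n$ modulo the odd part of $|g_{0}|$ and $\chi_{2}$ on $n\pmod{4}$ or $\pmod{8}$ (arising from the $\genleg{-1}{\cdot}$ and $\genleg{2}{\cdot}$ factors). When $g_{0}\mid d$, the shift $n\mapsto n+d$ preserves $\chi_{\mathrm{odd}}$, so the ratio $\chi(n+d)/\chi(n)=\chi_{2}(n+d)/\chi_{2}(n)$ is constant in $n$; direct evaluation shows this constant equals $-1$ exactly in subcases (2)(a) and (2)(b), forcing $[d,g]=0$ there (and equals $+1$ otherwise, so any $n$ with $\chi(n)=-1$ suffices). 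When $g_{0}\nmid d$ but $g_{0}\mid 3d$, one must have $3\mid g_{0}$, and the mismatch in $\chi_{\mathrm{odd}}$ between $n$ and $n+d$ is carried entirely by the mod-$3$ factor; combining with the behaviour of $\chi_{2}$ yields the three subcases of (3). Finally, the isolated exception $g_{0}=5$ reflects the fact that the non-residues mod $5$ form the consecutive block $\{2,3\}$, which is disjoint from its $d$-translate exactly when $d\equiv 2,3\pmod{5}$.

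The main obstacle is showing that the list (1)--(3) is exhaustive: for every $(g_{0},d)$ outside those conditions one must exhibit $n$ with $\chi(n)=\chi(n+d)=-1$, equivalently show that $S_{-1}:=\{a\pmod{M}:\chi(a)=-1\}$ meets $S_{-1}-d$. For small $|g_{0}|$ this is a finite verification; for large $|g_{0}|$ the size $|S_{-1}|=\varphi(M)/2$ together with the well-distributedness of the non-principal real character $\chi$ should force overlap outside the special algebraic configurations of (2) and (3). The most delicate step will be verifying that $g_{0}=5$ is the only non-divisibility-based sporadic obstruction --- in particular, that no other small $|g_{0}|$ slips through the case analysis --- which likely requires combining the character-sum bound with explicit computation for a bounded range of $g_{0}$.
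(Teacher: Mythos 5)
Your reduction follows the same outline as the paper's: strip off $g_s^2$, use quadratic reciprocity to turn $n\mapsto\genleg{g_0}{n}$ into a periodic sign condition, split it into a $2$-part and an odd part, and run a case analysis on $g_0$ and $d$ modulo $4$, resp.\ $8$ (this is exactly the content of the paper's Lemma \ref{Residue Classes}), with the coprimality to $g_s$ handled by CRT. Your treatment of the obstruction direction is essentially right for odd $g_0$, and your reading of the sporadic case $g_0=5$ is correct. But there are two problems, one local and one fatal. The local one: for $g_0\equiv 2\pmod 4$ the $2$-part $\chi_2$ is a character modulo $8$, and your claim that $\chi_2(n+d)/\chi_2(n)$ is constant in $n$ whenever $g_0\mid d$ is false when $d\equiv 2\pmod 4$ (take $g_0=2$, $d=2$: $\genleg{2}{n+2}/\genleg{2}{n}$ takes both values $\pm1$); constancy holds only when $4\mid d$. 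The conclusion you want there (no obstruction) is still true, but it requires the finer twelve-case analysis of (\ref{g 2 mod 8 residue})--(\ref{g 6 mod 8 residue}), where $[d,g]=0$ is shown to need \emph{several} of the quantities $|\Phi_{\varepsilon_1,\varepsilon_2}(\tilde g,d)|$ to vanish simultaneously.

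The fatal gap is exhaustiveness, which you yourself flag as the main obstacle and then do not close: to prove the theorem you must show that outside conditions (1)--(3) there always exists an odd $n$ with $\genleg{g_0}{n}=\genleg{g_0}{n+d}=-1$, and your proposal for this --- ``well-distributedness'' of $\chi$ for large $|g_0|$ plus ``explicit computation for a bounded range of $g_0$'' --- is not an argument. No character-sum bound is stated, no threshold is identified, and no finite verification is carried out, so nothing you write excludes further sporadic moduli beyond $5$. The paper closes this gap with an identity rather than an estimate: by orthogonality,
\[
4\,|\Phi_{\varepsilon_1,\varepsilon_2}(\tilde g,d)|
=\sum_{\substack{\alpha \bmod \tilde g\\ (\alpha(\alpha+d),\,\tilde g)=1}}
\left(1+\varepsilon_1\genleg{\alpha}{\tilde g}\right)\left(1+\varepsilon_2\genleg{\alpha+d}{\tilde g}\right),
\]
and each of the four resulting complete sums is evaluated \emph{exactly}, prime by prime, the key input being $\sum_{\alpha\bmod q}\genleg{\alpha}{q}\genleg{\alpha+d}{q}=-1$ if $q\nmid d$ and $q-1$ if $q\mid d$ (Lemma \ref{Lemma1}). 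Since $|\varphi_d(\tilde g)|=\prod_{q\mid(\tilde g,d)}(q-1)\prod_{q\mid\tilde g,\,q\nmid d}(q-2)$, while the correction terms have absolute value at most $3$, resp.\ $\prod_{q\mid(\tilde g,d)}(q-1)$, the vanishing of $|\Phi_{\varepsilon_1,\varepsilon_2}(\tilde g,d)|$ is reduced to elementary inequalities whose only solutions are $\tilde g\mid d$ (mixed signs), $\tilde g\nmid d$ with $\tilde g\mid 3d$, or $\tilde g\in\{1,5\}$ with the stated congruences on $d$ (Lemma \ref{zerocases}); this is precisely what certifies that your list (1)--(3) is complete. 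Replacing your equidistribution heuristic by this exact evaluation is the natural completion of your own character-theoretic setup, and with it your argument becomes, in substance, the paper's proof.
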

Conversely, if $d \in 2\N$ and $g \in \mathcal{H}_{1}$ do not satisfy any of the conditions identified in Theorem \ref{Finite Theorem}, we conjecture $\pi_{d,g}$ to be an infinite set (and moreover $\pi_{d,g} \subseteq \pi_{d}$ to have a non-zero natural density amongst the twin-prime pairs).  Let
\[\pi_{d}(x):= \#\{p \in \pi_{d}:p,p+d \leq x\}\]
and
\[\pi_{d,g}(x) :=\#\{p \in \pi_{d,g}: p, p+d \leq x\}.\]
The main aim of this paper is to suggest $-$ and provide support for $-$ the following conjecture:
\begin{conjecture}$($\textbf{Artin Two-Tuple Conjecture}$)$:\label{main conjecture}
Fix $d \in 2\N$ and $g \in \mathcal{H}_{1}$.  Then 
\[\lim_{x \rightarrow \infty}\frac{\pi_{d,g}(x)}{\pi_{d}(x)}= \textbf{c}_{g}(d)\mathbf{A}(d),\]
where 
\begin{equation}
\textbf{A}(d):= \prod_{q|d}\left(1+\frac{1}{q^2(q-1)}-\frac{2}{q(q-1)}\right)\prod_{q \mid d\pm 1}\left(1-\frac{1}{q(q-2)}\right)\prod_{q \nmid d(d\pm 1)}\left(1-\frac{2}{q(q-2)}\right),
\end{equation}
and $\textbf{c}_{g}(d)$  is as in 
\textnormal{(\ref{correction factor g=2})}, \textnormal{(\ref{correction factor odd})}, or \textnormal{(\ref{correction factor even})}.  In particular, $\textbf{c}_{g}(d)=0$ if and only if the pair $(d,g)$ satisfies any of the conditions identified in Theorem \ref{Finite Theorem}.
\end{conjecture}
$\mathbf{A}(d)$ may be viewed as a `linear' constant which emerges upon ignoring the contribution of all non-principal characters in the computation of $\pi_{d,g}$.  $\textbf{c}_{g}(d)$ is a correction factor that includes the contribution from quadratic characters.  Conjecture \ref{main conjecture} is derived from the heuristic assumption that the combined contribution of all additional higher-order characters is asymptotically negligible (Conjecture \ref{E Conjecture}).
\\
\\
When $|g|=2$ the correction factor is particularly simple, namely
\begin{equation}\label{correction factor g=2}
\textbf{c}_{g}(d) := \left\{
\begin{array}{l l l l}
2 & \text{ if } d \equiv 0(8)\\
1 & \text{ if } d \equiv 2,6(8)\\
0 & \text{ if } d \equiv 4(8).
\end{array} \right.
\end{equation}
Pollack \cite{Pollack} states a conjecture for $\pi_{2,2}(x)$, i.e. for the particular case $g=2$ and $d=2$, though he does not provide any theoretical justification.  Zoeteman \cite{Zo} extends this to a conjecture for $\pi_{d,2}(x)$ ($d \not \equiv 4(8)$), and outlines a theoretical approach based upon the circle method of Ramanujan, Hardy, and Littlewood.  Our work provides independent corroboration, further generalization, and additional theoretical justification, for these earlier conjectures.  In particular, our results are obtained by quite different methods than those outlined by Zoeteman, who employs an alternative heuristic for counting Artin prime solutions to general Diophantine equations, implicit in the work of Frei, Koymans, and Sofos \cite{Frei}.\\
\\
A strong form of the Hardy-Littlewood conjecture asserts that
\[\lim_{x \rightarrow \infty}\pi_{d}(x)= \mathfrak{S}(d)\cdot \Li_2(x)+ O(x^{1/2+\epsilon}),\]
where
\begin{equation}\label{twin prime constant}
\mathfrak{S}(d) := 2\prod_{\substack{p \mid d\\p>2}}\left(\frac{p-1}{p-2}\right)\prod_{p > 2}\left(1-\frac{1}{(p-1)^2}\right)
\end{equation}
and
\begin{equation}\label{Li 2}
\textnormal{Li}_{2}(x) := \int_2^x\frac{1}{(\log t)^{2}}dt.
\end{equation}
We expect a similar square-root cancellation to hold for Artin twin primes, namely that
\begin{equation}\label{conjecture in x}
\pi_{d,g}(x) = \textbf{c}_{g}(d)\mathbf{A}(d)\mathfrak{S}(d)\Li_2(x)+ O(x^{1/2+\epsilon}).
\end{equation}
Numerical data for Conjecture \ref{main conjecture} is provided in Figure 1 and Tables 1$-$3.
\begin{figure}[h]\label{one billion}
\centering
\includegraphics[scale=0.9]{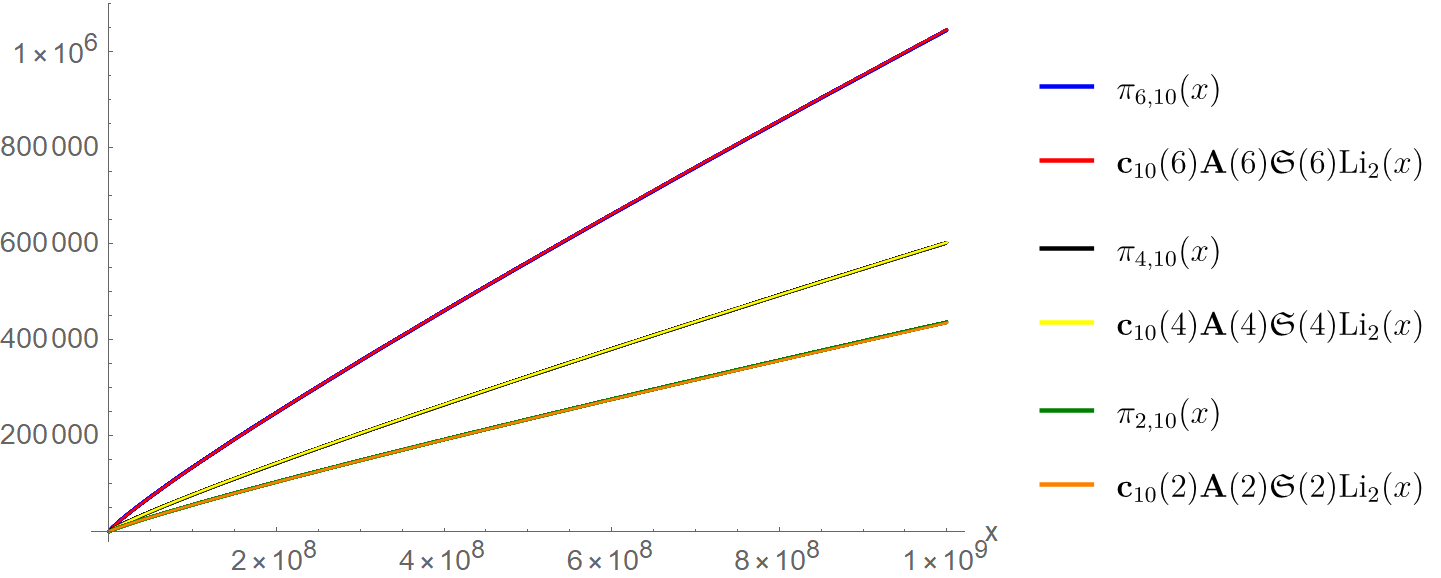}
\caption{\\Plots of $\pi_{2, 10}(x)$, $\pi_{4, 10}(x)$, and $\pi_{6, 10}(x)$ for $ x \leq 10^9 $.}
\end{figure}
\pagebreak
\begin{center}
\captionof{table}{\\
\footnotesize{\textit{Artin Twin Primes} (i.e. $p \in \pi_{2,g}$) up to $x = 10^{9}$ ($2 \leq g \leq 19$)}}
\renewcommand{\arraystretch}{1.2}
\begin{tabular}{|r|r|r|r|r|r|}
\hline
$ d $&$ g $&$ \pi_{d,g}(10^9) $&$ \pi_{d}(10^9) $&$ \pi_{d,g}/\pi_d$&$c_{g}(d)\textbf{A}(d)$\\
\hline
2&2&434292&3424506&0.126818&0.126943\\
2&3&869877&3424506&0.254015&0.253886\\
2&6&433653&3424506&0.126632&0.126943\\
2&7&501029&3424506&0.146306&0.146176\\
2&10&436385&3424506&0.127430&0.126943\\
2&11&485267&3424506&0.141704&0.141338\\
2&12&870116&3424506&0.254085&0.253886\\
2&13&320350&3424506&0.093546&0.093631\\
2&14&434533&3424506&0.126889&0.126943\\
2&15&267775&3424506&0.078193&0.078118\\
2&17&467112&3424506&0.136402&0.136476\\
2&18&435362&3424506&0.127131&0.126943\\
2&19&457300&3424506&0.133537&0.133665\\
\hline
\end{tabular}
\end{center}

\begin{center}
\captionof{table}{\\
\footnotesize{\textit{Artin Cousin Primes} (i.e. $p \in \pi_{4,g}$) up to $x = 10^{9}$ ($5 \leq g \leq 20$)}}
\renewcommand{\arraystretch}{1.2}
\begin{tabular}{|r|r|r|r|r|r|}
\hline
$ d $&$ g $&$ \pi_{d,g}(10^9) $&$ \pi_{d}(10^9) $&$ \pi_{d,g}/\pi_d$&$c_{g}(d)\textbf{A}(d)$\\
\hline
4&5&668522&3424680&0.195207&0.195296\\
4&6&936167&3424680&0.273358&0.273415\\
4&7&369095&3424680&0.107775&0.107709\\
4&10&601828&3424680&0.175732&0.175767\\
4&11&415533&3424680&0.121334&0.121204\\
4&13&518948&3424680&0.151531&0.151251\\
4&14&566971&3424680&0.165554&0.165706\\
4&15&602032&3424680&0.175792&0.175767\\
4&17&504006&3424680&0.147168&0.146974\\
4&19&438500&3424680&0.128041&0.127764\\
4&20&668812&3424680&0.195291&0.195296\\
\hline
\end{tabular}
\end{center}

\pagebreak
\begin{center}
\captionof{table}{\\
\footnotesize{\textit{Artin Sexy Primes} (i.e. $p \in \pi_{6,g}$) up to $x = 10^{9}$ ($2 \leq g \leq 20$)}}
\renewcommand{\arraystretch}{1.2}
\begin{tabular}{|r|r|r|r|r|r|}
\hline
$ d $&$ g $&$ \pi_{d,g}(10^9) $&$ \pi_{d}(10^9) $&$ \pi_{d,g}/\pi_d$&$c_{g}(d)\textbf{A}(d)$\\
\hline
6&2&1045109&6849047&0.152591&0.152588\\
6&5&1492420&6849047&0.217901&0.217982\\
6&6&1045093&6849047&0.152589&0.152588\\
6&7&1290264&6849047&0.188385&0.188491\\
6&10&1044298&6849047&0.152473&0.152588\\
6&11&1142428&6849047&0.166801&0.166745\\
6&13&770054&6849047&0.112432&0.112547\\
6&14&1044000&6849047&0.152429&0.152588\\
6&15&1344686&6849047&0.196331&0.196184\\
6&17&842745&6849047&0.123045&0.123035\\
6&18&1045470&6849047&0.152644&0.152588\\
6&19&1107167&6849047&0.161652&0.161619\\
6&20&1492389&6849047&0.217897&0.217982\\
\hline
\end{tabular}
\end{center}
The remainder of this paper is structured as follows. In Section 2, we write
\begin{align}
\pi_{d,g}(x) &= W_{d,g}(x)+E_{d,g}(x),
\end{align}
where
\begin{equation}\label{model reciprocity}
W_{d,g}(x):= \sum\limits_{\substack{p \leq x\\ p\in \pi_{d}}} w_{g}(p)w_{g}(p+d)
\end{equation}
is viewed as a weighted summation over principal and quadratic Dirichlet characters, and $E_{d,g}(x)$ (defined in (\ref{E sum})) is a weighted summation over higher-order characters.  Similarly flavored identities have previously been employed to study Artin's conjecture over function fields \cite{JensenMurty, Pappalardi}, and serve as the theoretical underpinnings for various Poisson binomial distribution models referred to above \cite{Mo1, Moree3, Moree4, Moree5}.  Guided by these case studies, we expect the contribution from $E_{d,g}(x)$ to be asymptotically negligible (Conjecture \ref{E Conjecture}).  In particular, this suggests that the asymptotic behavior of $\pi_{d,g}(x)$ may be modeled by $W_{d,g}(x)$, in the limit as $x \rightarrow \infty$.\\
\\
In Sections 3 we then employ a conjecture concerning twin primes in arithmetic progressions (Conjecture \ref{conjBHstrong}) to evaluate the sum
\begin{equation}\label{S sum}
S_{d}(x;a,f):=\sum_{\substack{p \leq x \\ p \in \pi_{d} \\ p \equiv a(f)}}\frac{\varphi(p-1)}{p-1}\frac{\varphi(p+d-1)}{p+d-1}.
\end{equation}
This enables us to directly compute $W_{d,2}(x)$ (Corollary \ref{g=2}), yielding Conjecture \ref{main conjecture} for the specific the case $|g|=2$.  The cases in which $|g| \neq 2$ are then dealt with in Section 4, and Section 5 explicitly computes those instances in which the correction factor $\textbf{c}_{g}(d)$ is equal to zero.\\
\\
\textbf{Acknowledgments}: We thank Pieter Moree and Mengzhen Liu for helpful discussions.  Tinkov\'a was supported by the Czech Science Foundation (GA\v{C}R) grant 17-04703Y, as well as the following Charles University projects: UNCE/SCI/022, PRIMUS/20/SCI/002, and SVV-2020-260589.  Waxman was supported by the Czech Science Foundation (GA\v{C}R) grant 17-04703Y, by a visiting postdoctoral fellowship at the Max Planck Institute for Mathematics, and by a Minerva Post-Doctoral Fellowship at the Technische Universit\"at Dresden.  Zindulka was supported by Charles University projects GA UK 742120, PRIMUS/20/SCI/002, and SVV-2020-260589, the Czech Science Foundation GACR grant 21-00420M, and the Charles University Research Centre program UNCE/SCI/022.
\section{Modeling $\pi_{d,g}(x)$ by $W_{d,g}(x)$}
Let $(\Z/p\Z)^{\times}$ denote the finite cyclic group of order $p-1$.  For any $g \in (\Z/p\Z)^{\times}$, we define
\[f_{p}(g) := \left\{
\begin{array}{l l}
1 & \text{ if } ord_{p}(g)=p-1 \\
0 & \text{ otherwise}.
\end{array} \right.\]
From \cite{JensenMurty}, it follows that $f_{p}(g)$ may be written as a \textit{sifting function}
\begin{align*}\label{fourier identity}
f_{p}(g) &= \sum_{k|p-1}\mathcal{S}_{k,p}(g),
\end{align*}
where
\[\mathcal{S}_{k,p}(g):= \frac{\varphi(p-1)}{p-1}\frac{\mu(k)}{\varphi(k)}\sum_{\textnormal{ord}\chi = k}\chi(g).\]
Here $\mu$ is the M{\"o}bius function, and the inner sum runs over the multiplicative characters of $(\Z/p\Z)^{\times}$ with order precisely $k$.  In particular, the \textit{linear} contribution to the above sifting function is given by
\begin{align*}
\mathcal{S}_{1,p}(g)&=\frac{\varphi(p-1)}{p-1}\chi_{0}(g),
\end{align*}
and the \textit{quadratic} contribution to the sifting function is equal to

\begin{align*}
\mathcal{S}_{2,p}(g)&=-\frac{\varphi(p-1)}{p-1}\genleg{g}{p},
\end{align*}
and thus we find that
\begin{align*}
\mathcal{S}_{1,p}(g)+\mathcal{S}_{2,p}(g)= w_{g}(p),
\end{align*}
where $w_{g}(p)$ is as in (\ref{wp}).\\
\\
Fix $g \in \mathcal{H}_{1}$, and write 
\begin{align*}
\pi_{d,g}(x)&= \sum_{p \in \pi_{d}(x)}f_{p}(g)f_{p+d}(g)\\
&= \sum_{k,l \leq x}H_{k,l}(x;d,g),
\end{align*}
where
\begin{align*}
H_{k,l}(x;d,g)&:=\sum_{\substack{p \in \pi_{d}(x)\\ p\equiv 1(k) \\ p+d \equiv 1(l)}}\mathcal{S}_{k}(p)\mathcal{S}_{l}(p+d)\\
&=\sum_{\substack{p \in \pi_{d}(x)\\ p\equiv 1(k) \\ p+d \equiv 1(l)}}\frac{\mu(k)}{\varphi(k)}\frac{\mu(l)}{\varphi(l)}\frac{\varphi(p-1)}{p-1}\frac{\varphi(p+d-1)}{p+d-1}\sum_{\textnormal{ord}\chi = k }\chi(g)\sum_{\textnormal{ord}\rho = l}\rho(g).
\end{align*}
Here $\chi$ is a character on $(\Z/p\Z)^{\times}$ while $\rho$ is a character on $(\Z/(p+d)\Z)^{\times}$.  In particular,
\begin{align*}
H_{1,1}(x;d,g)&=\sum_{\substack{p \in \pi_{d}(x)\\p \nmid g}}\frac{\varphi(p-1)}{p-1}\frac{\varphi(p+d-1)}{p+d-1}\\
H_{2,1}(x;d,g)&=-\sum_{p \in \pi_{d}(x)}\genleg{g}{p}\frac{\varphi(p-1)}{p-1}\frac{\varphi(p+d-1)}{p+d-1}\\
H_{1,2}(x;d,g)&=-\sum_{p \in \pi_{d}(x)}\genleg{g}{p+d}\frac{\varphi(p-1)}{p-1}\frac{\varphi(p+d-1)}{p+d-1}
\end{align*}
and
\[H_{2,2}(x;d,g)=\sum_{p \in \pi_{d}(x)}\genleg{g}{p}\genleg{g}{p+d}\frac{\varphi(p-1)}{p-1}\frac{\varphi(p+d-1)}{p+d-1}.\]
The $H_{1,1}(x;d,g)$ term corresponds to the `linear' contribution, i.e. to a naive model constructed without taking into consideration any quadratic obstructions.  In particular, it is independent of $g$.  The necessary correction factor is then obtained upon including the contribution from quadratic characters, namely by considering the sum
\begin{equation}\label{linear plus quadratic}
H_{1,1}(x;d,g)+H_{1,2}(x;d,g)+H_{2,1}(x;d,g)+H_{2,2}(x;d,g)= W_{d,g}(x).
\end{equation}
It follows that \begin{equation*}
\pi_{d,g}(x)=W_{d,g}(x)+E_{d,g}(x),
\end{equation*}
where
\begin{equation} \label{E sum}
E_{d,g}(x):=\sum_{\substack{k,l \in \mathbb{N}\\(k,l)\neq (1,1)\\(k,l)\neq (2,1)\\(k,l)\neq (1,2)\\(k,l)\neq (2,2)}}H_{k,l}(x;d,g).
\end{equation}
Drawing guidance from similarly flavored identities previously studied \cite{Ciolan, JensenMurty, Pappalardi, Mo1, Moree3, Moree4, Moree5}, we expect $W_{d,g}(x)$ to contribute the main term, and for the contribution coming from $E_{d,g}(x)$ to be of lower order: 
\begin{conjecture}$($\textbf{Moree's Model}$)$\label{E Conjecture}
Fix $d \in 2\N$ and $g \in \mathcal{H}_{1}$.  The contribution of higher-order characters to $\pi_{d,g}(x)$ is asymptotically negligible, i.e.
\[E_{d,g}(x) = o\left(\frac{x}{(\log x)^{2}}\right).\]
\end{conjecture}
Conjecture \ref{E Conjecture} amounts to the heuristic assumption that the distribution of Artin prime pairs amongst the ordinary primes pairs is largely governed by a Poisson binomial distribution.  In practice, we expect $E_{d,g}(x) = O(x^{1-\delta})$ for some $0< \delta < 1$. 
\section{Computing $S_{d}(x;a,f)$}
Let $g \in \N$, and write $g = q_1^{n_{1}}q_2^{n_{2}}\dots q_r^{n_{r}} \in \mathbb{N}$, where the $q_{i}$'s are pairwise distinct prime numbers.  Fix $ d \in 2\N$, and consider the set
\[\varphi_{d}(g):=\{1 \leq \alpha \leq g: (\alpha,g) = (\alpha+d,g)=1)\}.\]
Note that for prime $q$,
\begin{equation}\label{varphi sum on a prime}
|\varphi_{d}(q^{n})| = \left\{
\begin{array}{l l}
q^{n-1}(q-2) & \text{ if } q\nmid d, \\
q^{n-1}(q-1) & \text{ if } q | d.
\end{array} \right.
\end{equation}
Since $|\varphi_{d}|$ is multiplicative, it follows that
\begin{equation}\label{varphi sum}
|\varphi_{d}(g)| = g\prod_{i=1}^{s}\frac{(q_{i}-1)}{q_{i}}\prod_{i=s+1}^{r}\frac{(q_{i}-2)}{q_{i}},
\end{equation}
where $q_{1}\dots, q_{s}|(g,d)$ and $q_{s+1}\dots, q_{r}\not |(g,d)$.\\
\\
Let
\[ \pi_{d}(x; a, f):= \#\{p \in \pi_{d}: p \leq x, \hspace{2mm} p \equiv a\Mod{f}\}.\]
The following generalization of the twin prime conjecture (alternatively: a special case of the \textit{Bateman Horn conjecture} \cite{Aleth, BH}) provides an estimate for the size of $\pi_{d}(x; a, f)$.  We require some uniformity in the error term \cite{Baier}, analogous to how the Siegal Walfisz theorem is employed to count Artin primes in previous variants of Moree's model \cite{Mo1, Moree3, Moree4, Moree5}:
\begin{conjecture} \label{conjBHstrong}
$($\textbf{Prime Gaps in Arithmetic Progressions}$)$ Fix $ B > 0$, and $a,f \in \N$. Then for all $f \leq (\log x)^B$ with $ (a,f) = (a+d,f) = 1 $,
\[\pi_d(x; a, f) = \frac{\mathfrak{S}(d)}{|\varphi_d(f)|}\cdot\textnormal{Li}_{2}(x)+O_{B}\left(\frac{x}{(\log x)^C}\right)\]
for any $C > 2$, where $\mathfrak{S}(d)$ is as in \textnormal{(\ref{twin prime constant})} $($and where the implied constant depends on the choice of $B)$.
\end{conjecture}
\begin{lemma}\label{s computation}
Fix $a,f \in \mathbb{N}$ such that $(a,f)=(a+d,f) = 1$, and moreover assume Conjecture $\textnormal{\ref{conjBHstrong}}$.  Then for any $B>2$,
\begin{align*}
S_{d}(x;a, f) &= \frac{\mathfrak{S}(d)}{|\varphi_{d}(f)|}K_{d,f}(a)\textbf{A}_{f}(d)\textnormal{Li}_{2}(x)+O\left(\frac{x}{(\log x)^{B}}\right),
\end{align*}
where
\begin{align} \label{K constant}
	K_{d,f}(a)& : = \prod_{\substack{p \mid (f,a-1)}}\left(1-\frac{1}{p}\right)	
	\prod_{\substack{p \mid (f,a+d-1)}}
	\left(1-\frac{1}{p}\right),
\end{align}
and
\[\textbf{A}_{f}(d):=\sum_{\substack{m_{1}^{1},m_{2}^{2} \\(m_{1}^{1}m_{2}^{2},f) =1 \\ (m_{1}^{1},m_{2}^{2}) =1\\(m_{1}^{1},1+d)=1\\(m_{2}^{2},1-d)=1}}\frac{\mu(m_{1}^{1})}{m_{1}^{1}\left|\varphi_{d}(m_{1}^{1})\right|}\frac{\varphi((m_{1}^{1},d))}{(m_{1}^{1},d)}\frac{\mu(m_{2}^{2})}{m_{2}^{2}|\varphi_{d}(m_{2}^{2})|}.\]
\end{lemma}
\begin{proof}
By M{\"o}bius inversion,
\begin{equation}\label{mobius inversion}
\frac{\varphi(p-1)}{p-1} = \sum_{k | p-1}\frac{\mu(k)}{k},
\end{equation}
and therefore
\begin{align}\label{inner sum}
S_{d}(x;a, f) &= \sum_{\substack{p \leq x\\
p \in \pi_{d}\\
p \equiv a(f)}}\left(\sum_{M_{1} | p-1}\frac{\mu(M_{1})}{M_{1}}\right)\left(\sum_{M_{2} | p+d-1}\frac{\mu(M_{2})}{M_{2}}\right)\nonumber \\
&= \sum\limits_{\substack{M_{1}, M_{2}\\M_{1},M_{2} \hspace{1mm} \square\textnormal{-free}}}\frac{\mu(M_{1})\mu(M_{2})}{M_{1}M_{2}}\sum\limits_{\substack{p \leq x \\ p \in \pi_{d} \\ p \equiv a(f)\\p \equiv 1(M_{1})\\p \equiv 1-d(M_{2})}}1.
\end{align}

Write
\begin{equation}\label{eqDefkl}
M_{1} = m_{1}^{0} \cdot m_{1}^{1},\hspace{5mm} \textnormal{ and } \hspace{5mm} M_{2} = m_{2}^{0}\cdot m_{2}^{1}\cdot m_{2}^{2},
\end{equation}
where 
\[m_{1}^{0} := (f, M_{1}),\hspace{5mm}  m_{1}^{1} := \frac{M_{1}}{(f,M_{1})},\]
and
\[m_{2}^{0} := (f,M_{2}), \hspace{5mm} m_{2}^{1} := \left(\frac{M_{1}}{(f,M_1)},M_{2}\right) \hspace{2mm}  m_2^{2} := \frac{M_{2}}{(f,M_{2}) \cdot \left(\frac{M_{1}}{(f,M_{1})},M_{2}\right)}.\]
In other words, we group the primes $p$ dividing $M_{2}$ as follows: if $p \mid f$ then $p|m_{2}^{0}$; if $p\nmid f$ but $p| M_{1}$ then $p|m_{2}^{1}$; and if $p\nmid f \cdot M_{1}$ then $p|m_{2}^{2}$.\\
\\
By the Chinese Remainder Theorem (CRT), the congruence conditions

\begin{align}
	p & \equiv a\Mod{f}\nonumber \\
	p & \equiv 1\Mod{M_{1}} \label{p condition} \\
	p & \equiv 1-d\Mod{M_{2}} \nonumber
\end{align}
are simultaneously solvable if and only if
\begin{align}
	a&\equiv 1 \Mod{m_{1}^{0}} \label{condition 1}\\
	a&\equiv 1-d \Mod{m_{2}^{0}} \label{condition 2}\\
	1&\equiv 1-d \Mod{m_{2}^{1}}, \label{condition 3}
\end{align}
where we further employ the fact that
\[1\equiv 1-d \Mod{m_{2}^{1}} \Leftrightarrow 1\equiv 1-d \Mod{(M_{1},M_{2})}\]
whenever (\ref{condition 1}) and (\ref{condition 2}) are both satisfied.  Such a solution is moreover unique modulo
\begin{align*}
LCM(f,M_{1},M_{2}) = f m_{1}^{1} m_{2}^{2},
\end{align*}
and the explicit solution is given by
\begin{align}
	p&\equiv a \Mod{f} \label{condition 4}\\
	p&\equiv 1 \Mod{m_{1}^{1}} \label{condition 5}\\
	p&\equiv 1-d \Mod{m_{2}^{2}}, \label{condition 6}
\end{align}
where $\{f,m_{1}^{1},m_{2}^{2}\}$ now form a pairwise coprime set.\\
\\
It follows that if $A_{M}$ denotes the unique residue class modulo $f m_{1}^{1} m_{2}^{2}$ for which the conditions in (\ref{condition 4})$-$(\ref{condition 6}) are satisfied, then
\begin{align}\label{S sum}
S_{d}(x;a, f) &=\sum_{\substack{m_{1}^{1},m_{2}^{2}\\(m_{1}^{1}m_{2}^{2},f) =1 \\ (m_{1}^{1},m_{2}^{2}) =1}}\frac{\mu(m_{1}^{1})\mu(m_{2}^{2})}{m_{1}^{1}m_{2}^{2}}\sum_{\substack{m_{1}^{0},m_{2}^{0} \\m_{1}^{0} \mid (f, a-1)\\m_{2}^{0} \mid (f, a+d-1)}}\frac{\mu(m_{1}^{0})\mu(m_{2}^{0})}{m_{1}^{0}m_{2}^{0}}\sum_{m_{2}^{1}|(d,m_{1}^{1})}\frac{\mu(m_{2}^{1})}{m_{2}^{1}}\nonumber \\
&\phantom{=}\times \pi_d(x;A_{M}, fm_{1}^{1}m_{2}^{2}).
\end{align}
Next, suppose $A_{M} \in \varphi_{d}\left(fm_{1}^{1}m_{2}^{2}\right)$, i.e.,

\[\left(A_{M},fm_{1}^{1}m_{2}^{2}\right)=\left(A_{M}+d,fm_{1}^{1}m_{2}^{2}\right)=1.\]
By the multiplicative property of the greatest common divisor function,
\begin{align*}
\left(A_{M},fm_{1}^{1}m_{2}^{2}\right)=1 &\Leftrightarrow \left(A_{M},f\right)=\left(A_{M},m_{1}^{1}\right)=(A_{M},m_{2}^{2})=1\\
&\Leftrightarrow (1-d,m_{2}^{2})=1,
\end{align*}
where in the last step we note that $(A_{M},m_{1}^{1}) = 1$ already follows from (\ref{condition 5}), and that $(A_{M},f)=1$ already follows from the assumption of Lemma \ref{s computation}.  Similarly, we find that
\begin{align*}
\left(A_{M}+d,fm_{1}^{1}m_{2}^{2}\right)=1 &\Leftrightarrow (1+d,m_{1}^{1})=1.
\end{align*}
In other words, given the assumptions of Lemma \ref{s computation}, $A_{M} \in \varphi_{d}(f m_{1}^{1} m_{2}^{2})$ implies the additional constraints
\begin{equation}
\label{eqNecCondition}
	(m_{2}^{2},1-d) = 1 \hspace{5mm} \textnormal{ and } \hspace{5mm} (m_{1}^{1},1+d)= 1.
\end{equation}
Next, note that if $A_{M} \not \in \varphi_{d}(f m_{1}^{1} m_{2}^{2})$, then 
\[\pi_d (x;A_{M}, f m_{1}^{1} m_{2}^{2} )\leq 1\]
uniformly in $m_{1}^{1}$ and $m_{2}^{2}$.  Moreover, since the inner sums in (\ref{S sum}) are finite, and since $\pi_{d}(x;A_{M},fm_{1}^{1}m_{2}^{2}) = 0$ whenever either $m_{1}^{1}, m_{2}^{2} > x$, we may bound the contribution from terms for which $A_{M} \not \in \varphi_{d}(fm_{1}^{1}m_{2}^{2})$ by
\begin{equation*}
\ll \sum\limits_{\substack{m_{1}^{1},m_{2}^{2} < x}}\frac{\mu(m_{1}^{1})\mu(m_{2}^{2})}{m_{1}^{1}m_{2}^{2}}\pi_{d}\left(x;A_{M}, fm_{1}^{1}m_{2}^{2}\right)\ll \sum\limits_{m_{1}^{1} < x}\frac{1}{m_{1}^{1}}\sum_{m_{2}^{2} < x}\frac{1}{m_{2}^{2}}\ll (\log x)^{2}.
\end{equation*}
It then follows that
\begin{align}\label{eqSum}
S_{d}(x;a, f) =&\sum_{\substack{m_{1}^{1},m_{2}^{2}\\(m_{1}^{1}m_{2}^{2},f) =1 \\ (m_{1}^{1},m_{2}^{2}) =1\\(m_{1}^{1},1+d)=1\\(m_{2}^{2},1-d)=1}}\frac{\mu(m_{1}^{1})\mu(m_{2}^{2})}{m_{1}^{1}m_{2}^{2}}\sum_{\substack{m_{1}^{0},m_{2}^{0} \\m_{1}^{0} \mid (f, a-1)\\m_{2}^{0} \mid (f, a+d-1)}}\frac{\mu(m_{1}^{0})\mu(m_{2}^{0})}{m_{1}^{0}m_{2}^{0}}\sum_{m_{2}^{1}|(d,m_{1}^{1})}\frac{\mu(m_{2}^{1})}{m_{2}^{1}}\\
&\phantom{=}\times \pi_d(x;A_{M}, fm_{1}^{1}m_{2}^{2})+O\bigg((\log x)^{2}\bigg).\nonumber
\end{align}

Next, suppose $A_{M} \in \varphi_{d}(f m_{1}^{1} m_{2}^{2})$.  Since $\pi_d(x;A_{M},f m_{1}^{1} m_{2}^{2}) \leq \frac{x}{m_{1}^{1}m_{2}^{2}}$, we find that for any fixed $B > 2$,
\begin{align*}
\sum_{\substack{m_{1}^{1}, m_{2}^{2} \\m_{1}^{1} > (\log x)^B}}&\frac{\mu(m_{1}^{1})\mu(m_{2}^{2})}{m_{1}^{1}m_{2}^{2}}\pi_d(x;A_{M}, f m_{1}^{1} m_{2}^{2})&\leq \sum\limits_{m_{1}^{1} > (\log x)^B}\frac{x}{(m_{1}^{1})^2}\sum_{m_{2}^{2} = 1}^\infty\frac{1}{(m_{2}^{2})^2}\\
&= O\left(\frac{x}{(\log x)^B}\right),
\end{align*}
and similarly for the sum over $m_2^{2} > (\log x)^B$.  Upon applying Conjecture \ref{conjBHstrong}, we conclude that
\begin{align}\label{minus error}
\begin{split}
S_{d}(x;a, f) =&\frac{\mathfrak{S}(d)}{|\varphi_{d}(f)|}\cdot \Li_2(x)\times \\
&\sum_{\substack{m_{1}^{1},m_{2}^{2} < (\log x)^B \\(m_{1}^{1}m_{2}^{2},f) =1 \\ (m_{1}^{1},m_{2}^{2}) =1\\(m_{1}^{1},1+d)=1\\(m_{2}^{2},1-d)=1}}\sum_{\substack{m_{1}^{0},m_{2}^{0} \\m_{1}^{0} \mid (f, a-1)\\m_{2}^{0} \mid (f, a+d-1)}}\sum_{m_{2}^{1}|(d,m_{1}^{1})}\frac{\mu(m_{1}^{0})\mu(m_{1}^{1})\mu(m_{2}^{0})\mu(m_{2}^{1})\mu(m_{2}^{2})}{m_{1}^{0}m_{1}^{1}m_{2}^{0}m_{2}^{1}m_{2}^{2}\left|\varphi_{d}(m_{1}^{1} m_{2}^{2})\right|}\\
&+ O\left(\frac{x}{(\log x)^B}\right).
\end{split}
\end{align}
Here we note that all the error terms coming from each application of Conjecture \ref{conjBHstrong} may together be incorporated into $O\left(x(\log x)^{-B}\right)$, since upon fixing $\epsilon > 0$ and setting $C = B+\epsilon$ in each application of Conjecture \ref{conjBHstrong}, we bound the error by
\[
\ll \sum\limits_{\substack{m_1^{1} < (\log x)^B\\m_2^{2} < (\log x)^B}}\frac{\mu(m_1^{1})\mu(m_{2}^{2})}{m_{1}^{1}m_{2}^{2}}\cdot \frac{x}{(\log x)^{C}} \ll (\log \log x)^2\cdot \frac{x}{(\log x)^{B+\epsilon}} \ll \frac{x}{(\log x)^{B}}.\]
Finally, by Mertens' third theorem,
\[\lim_{n \rightarrow \infty}\log n\prod_{p \leq n}\left(1-\frac{1}{p}\right) = e^{-\gamma},\]
where $\gamma$ is the Euler-Mascheroni constant.  It thus follows from (\ref{varphi sum}) that for square-free $n \in \mathbb{N}$,
\[
\frac{1}{|\varphi_d(n)|} \leq \frac{2}{n}\prod_{\substack{p \leq n\\p \neq 2}}\frac{p}{p-2}=\frac{2}{n}\prod_{\substack{p \leq n\\p \neq 2}}\left(\frac{p}{p-1}\right)^2\left(1-\frac{1}{(p-1)^2}\right)^{-1}\sim  \frac{e^{2\gamma}}{\mathfrak{S}(2)}\frac{(\log n)^2}{n},
\]
in the limit as $n \rightarrow \infty$.  Thus
\begin{align*}
	&\sum_{m_{1}^{1} > (\log x)^B}\sum\limits_{m_{2}^{2} = 1}^\infty \frac{\mu(m_{1}^{1})\mu(m_2^{2})}{m_1^1 m_2^2|\varphi_d(m_1^1 m_2^2)|} \ll \sum\limits_{m_1^1 > (\log x)^B}\sum\limits_{m_2^2 = 1}^\infty \frac{\log(m_1^1 m_2^2)^{2}}{(m_1^1m_2^2)^2} \ll (\log x)^{-(B-\epsilon')},
\end{align*}
for any $0 < \epsilon' < 2$, i.e.
\begin{align}
\begin{split}\label{added error}
	\sum\limits_{m_1^1 > (\log x)^B}\sum\limits_{m_2^2 = 1}^\infty \frac{\Li_2(x)}{m_1^1 m_2^2|\varphi_d(m_1^1 m_2^2)|}& = \sum_{m_2^2 > (\log x)^B}\sum\limits_{m_1^1 = 1}^\infty \frac{\Li_2(x)}{m_1^1 m_2^2|\varphi_d(m_1^1 m_2^2)|}\\
&= O\left(\frac{x}{(\log x)^{2+B-\epsilon'}}\right) = O\left(\frac{x}{(\log x)^{B+\epsilon}}\right).
\end{split}
\end{align}
Lemma \ref{s computation} then follows from (\ref{minus error}) and (\ref{added error}), where by (\ref{mobius inversion}) we note that
\[\sum_{m_{2}^{1}|(d,m_{1}^{1})}\frac{\mu(m_{2}^{1})}{m_{2}^{1}} = \frac{\varphi((m_{1}^{1},d))}{(m_{1}^{1},d)},\]
and that
\begin{align*}
\sum_{\substack{m_{1}^{0},m_{2}^{0}\\ m_{1}^{0}|(f,a-1)\\m_{2}^{0}|(f,a+d-1)}}\frac{\mu(m_{1}^{0})\mu(m_{2}^{0})}{m_{1}^{0}m_{2}^{0}} &= \frac{\varphi((f,a-1))}{(f,a-1)}\frac{\varphi((f,a+d-1))}{(f,a+d-1)}\\
&=\prod_{\substack{p \mid (f,a-1)}}\left(1-\frac{1}{p}\right)	
	\prod_{\substack{p \mid (f,a+d-1)}}
	\left(1-\frac{1}{p}\right).
\end{align*}

\end{proof}
\begin{remark} \textnormal{In fact, a slightly weaker form of Conjecture \ref{conjBHstrong} is sufficient to derive Conjecture \ref{main conjecture}.  Specifically, to compute the asymptotic growth of $S_{d}(x;a,f)$ in Lemma \ref{s computation} (and subsequently to compute the asymptotic growth of $W_{d,g}(x)$ in Theorem \ref{A constant}), one need only assume Conjecture \ref{conjBHstrong} for some fixed $B, C > 2$.}
\end{remark}
\subsection{A Product Formula for $S_{d}(x;a,f)$}
Let
\begin{equation}\label{main constant}
\textbf{A}(d)= \prod_{p \textnormal { prime}}\rho_{d}(p),
\end{equation}
where
\begin{equation*}
\rho_{d}(p) := \left\{
\begin{array}{l l l l}
1+\frac{1}{p^2(p-1)}-\frac{2}{p(p-1)} & \text{ if } p\mid d\\
1-\frac{1}{p(p-2)} & \text{ if } p\mid d\pm 1\\
1-\frac{2}{p(p-2)} & \text{ if } p\nmid d(d^2 - 1).
\end{array} \right.
\end{equation*}

\begin{cor}\label{S constant}
Fix $a,f \in \mathbb{N}$ such that $(a,f)=(a+d,f) = 1$, and moreover assume Conjecture \ref{conjBHstrong}.  Then for any $B>2$,
\begin{equation*}
S_{d}(x;a, f) =\mathfrak{S}(d)\textbf{e}_{f}(d)\textbf{A}(d) K_{d,f}(a)\textnormal{Li}_{2}(x)+O\left(\frac{x}{(\log x)^{B}}\right),
\end{equation*}
where
\begin{equation}
\textbf{e}_{f}(d):= \frac{1}{|\varphi_{d}(f)|}\prod_{\substack{p|f}}\rho_{d}(p)^{-1}.
\end{equation}
In particular,
\[S_{d}(x;1,1) = H_{1,1}(x;d,g)= \textbf{A}(d) \mathfrak{S}(d)\textnormal{Li}_{2}(x)+O\left(\frac{x}{(\log x)^{B}}\right).\]
\end{cor}

\begin{proof}
Let $ \eta$ be a multiplicative function, and note that 
\[
	\sum_{\substack{n = 1\\n\text{ square-free}}}^\infty \eta(n) = \prod_p \left(1+\eta(p)\right),
\]
whenever the series $\sum_{n=1}^{\infty} \eta(n)$ is absolutely convergent (see \cite[Theorem~11.6]{Ap}).  
Upon defining the multiplicative functions
\begin{align*}
	\eta_1(n)& := \begin{cases}
		\frac{\mu(n)}{n\left|\varphi_d(n)\right|}\frac{\varphi((n,d))}{(n,d)}&\text{ if } (n,f)=1 \text{ and }(n, 1+d)=1\\
		0 &\text{ otherwise},
	\end{cases}\\
	\eta_2(n)& := \begin{cases}
		\frac{\mu(n)}{n|\varphi_{d}(n)|}&\text{ if }(n,f)=1 \text{ and }(n, 1-d)=1\\
		0&\text{ otherwise},
	\end{cases}
\end{align*}
it then follows that 
\begin{align*}
\textbf{A}_{f}(d) &= \sum_{\substack{n, m \\n,m \text{ square-free}\\(n,m) = 1}}\eta_1(n) \eta_2(m)= \prod_{p} \left(1+\eta_1(p)+\eta_2(p)\right).
\end{align*}
Upon noting that for any $p \nmid f$,
\begin{equation}
	\eta_{1}(p) = \left\{
\begin{array}{l l}
0 & \text{ if } p|d+1 \\
-\frac{1}{p^{2}} & \text{ if } p|d \\
-\frac{1}{p(p-2)} & \text{ if } p\nmid d(d+1),
\end{array} \right.
\end{equation}
and
\begin{equation}
	\eta_{2}(p) = \left\{
\begin{array}{l l}
0 & \text{ if } p|d-1 \\
-\frac{1}{p(p-1)} & \text{ if } p|d \\
-\frac{1}{p(p-2)} & \text{ if } p\nmid d(d-1),
\end{array} \right.
\end{equation}
we conclude that 
\[\textbf{A}_{f}(d) = \prod_{p \nmid f} \rho_{d}(p) = \textbf{A}(d)\textbf{e}_{f}(d),\]
from which Corollary \ref{S constant} now follows. 
\end{proof}

\subsection{The Correction Factor for $g=2$}
We can now compute $W_{d,g}(x)$ in the particular case that $g=2$ (the case $g=-2$ is very similar):
\begin{cor}\label{g=2}
Fix $g=2$ and assume Conjecture \textnormal{\ref{conjBHstrong}}.  Then 
\begin{equation}\label{g=2 equation}
W_{d,2}(x)=\textbf{c}_{2}(d)\textbf{A}(d)\mathfrak{S}(d)\textnormal{Li}_{2}(x)+O\left(\frac{x}{(\log x)^{C}}\right)
\end{equation}
where $\textbf{c}_{2}(d)$ is as in \textnormal{(\ref{correction factor g=2})}.
\end{cor}
\begin{proof}
Since
\begin{equation}
	\genleg{2}{p} =\left\{
\begin{array}{l l}
1 & \text{ if } p \equiv 1,7(8) \\
-1 & \text{ if } p \equiv 3,5(8),
\end{array} \right.
\end{equation}
we find that
\begin{equation}
	W_{d,2}(x) =4 \times \left\{
\begin{array}{l l}
S_{d}(x;3,8)+S_{d}(x;5,8) & \text{ if } d \equiv 0(8) \\
S_{d}(x;3,8) & \text{ if } d \equiv 2(8) \\
0 & \text{ if } d \equiv 4(8) \\
S_{d}(x;5,8) & \text{ if } d \equiv 6(8).
\end{array} \right.
\end{equation}
Equation (\ref{g=2 equation}) now follows from Corollary \ref{S constant} upon noting that 
\begin{align*}
S_{d}(x;3,8)=S_{d}(x;5,8)&=\frac{1}{4}\textbf{A}(d)\mathfrak{S}(d)\textnormal{Li}_{2}(x)+O\left(\frac{x}{(\log x)^{C}}\right).
\end{align*}
\end{proof}
\section{The Correction Factor for $g\neq 2$}
\subsection{Quadratic Reciprocity}To compute $\textbf{c}_{g}(d)$ when $g \neq 2$, we make extensive use of quadratic reciprocity.  For $g \in \mathcal{H}_{1} \setminus \{\pm 2\}$, write $g = g_{0}g_{s}^2$, where $g_{0}$ is square-free, and define
\begin{equation}\label{g tilde}
\tilde{g} := \left\{
\begin{array}{l l}
|g_{0}| & \text{ if } g_{0} \textnormal{ is odd}, \\
\left| \frac{g_{0}}{2}\right| & \text{ if } g_{0} \textnormal{ is even}. \\
\end{array} \right.
\end{equation}

Fix $d \in 2\N$ and $\varepsilon_{1}, \varepsilon_{2} \in \{0, \pm 1\}$.  For odd $n \in \mathbb{N}$, define
\[
	\delta_{\varepsilon_{1},\varepsilon_{2}}(\tilde{g},d)(n) := \begin{cases}
		1 &\text {if }\genleg{n}{\tilde{g}} = \varepsilon_{1} \textnormal{ and }\genleg{n+d}{\tilde{g}} = \varepsilon_{2}\\
		0&\text{ otherwise},
	\end{cases}
\]
where $(n/\tilde{g})$ now denotes the \textit{Jacobi} symbol.

\begin{lemma}\label{Residue Classes}
Fix $p \in \pi_{d}$ and suppose $(p,g_{s})=(p+d,g_{s})=1$. Then

\begin{equation}\label{odd g residue}
\Delta_{d,g}(p) = \left\{
\begin{array}{l l  l}
\delta_{-1,-1}(\tilde{g},d)(p) & \text{ if } g_{0} \equiv 1(4)\\
\delta_{T_{1}(p),T_{1}(p+d)}(\tilde{g},d)(p) & \text{ if } g_{0} \equiv 3(4)\\
\delta_{T_{2}(p),T_{2}(p+d)}(\tilde{g},d)(p) & \text{ if } g_{0} \equiv 2(8)\\
\delta_{T_{3}(p),T_{3}(p+d)}(\tilde{g},d)(p) & \text{ if } g_{0} \equiv 6(8),
\end{array} \right.
\end{equation}

where $T_{1}$, $T_{2}$, and $T_{3}$ are as in \textnormal{(\ref{T1})}, \textnormal{(\ref{T2})}, \textnormal{(\ref{T3})}, respectively.
\end{lemma}

\begin{proof}
Since $(p,g_s)=(p+d,g_s)=1$, we have $\genleg{g}{p} =\genleg{g_0}{p}$ and $\genleg{g}{p+d} =\genleg{g_0}{p+d}$.  The idea of the proof is to use quadratic reciprocity to write $\genleg{g}{p}$ and $\genleg{g}{p+d}$ in terms of $\genleg{p}{\tilde{g}}$ and $\genleg{p+d}{\tilde{g}}$. The result depends heavily on the particular residue classes of $n$, $d$, and $g_{0}$.\\
\\
First, suppose $g_0 \equiv 1 \Mod{4}$.  If $g_0 > 0$, then
\begin{equation}\label{1 mod 4 flip}
	\genleg{g_0}{p} =\genleg{p}{g_0} =\genleg{p}{\tilde{g}},
\end{equation}
and similarly if $g_0 < 0$, then
\[\genleg{g_0}{p} = \genleg{-1}{p}\genleg{\tilde{g}}{p} = \genleg{-1}{p}(-1)^{\frac{p-1}{2}}\genleg{p}{\tilde{g}} = \genleg{p}{\tilde{g}}.\]
It follows that

\begin{equation}\label{odd g residue}
\Delta_{d,g}(p) =  \delta_{-1,-1}(\tilde{g},d)(p).
\end{equation}
Next, suppose $g_0 \equiv 3 \Mod{4}$.  If $g_0 > 0$, then
\begin{equation}\label{3 mod 4 flip}
	\genleg{g_0}{p} = \left\{
\begin{array}{l l}
-\genleg{p}{\tilde{g}} & \text{ if } p \equiv 3(4) \\
\genleg{p}{\tilde{g}} & \text{ if } p \equiv 1(4),
\end{array} \right.
\end{equation}
and similarly if $g_0 < 0$, then
\begin{equation}\label{3 mod 4 flip negative}
	\genleg{g_0}{p}=\genleg{-1}{p}\genleg{\tilde{g}}{p}	=\genleg{-1}{p}\genleg{p}{\tilde{g}} = \left\{
\begin{array}{l l}
-\genleg{p}{\tilde{g}} & \text{ if } p \equiv 3(4) \\
\genleg{p}{\tilde{g}} & \text{ if } p \equiv 1(4).
\end{array} \right.
\end{equation}
Upon defining
\begin{equation}\label{g tilde}
T_{1}(n) := \left\{
\begin{array}{l l}
1 & \text{ if } n \equiv 3(4) \\
-1 & \text{ if } n \equiv 1(4)\\
\end{array} \right.
\end{equation}
for odd $n \in \mathbb{N}$, it follows that for $g_{0}\equiv 3(4)$,

\begin{align*}
\genleg{g_{0}}{p} = -1 & \Leftrightarrow \genleg{p}{\tilde{g}} = T_{1}(p),
\end{align*}
and therefore that
\begin{equation}\label{T1}
\Delta_{d,g}(p) = 
\delta_{T_{1}(p),T_{1}(p+d)}(\tilde{g},d)(p).
\end{equation}
Next, suppose $g_0$ is even.  If $g_0> 0$, then
\begin{equation}\label{even p flip}
	\genleg{g_0}{p} = \genleg{2}{p}\genleg{\tilde{g}}{p} = \left\{
\begin{array}{l l}
\genleg{\tilde{g}}{p} & \text{ if } p \equiv 1,7(8) \\
- \genleg{\tilde{g}}{p} & \text{ if } p \equiv 3,5(8),
\end{array} \right.
\end{equation}
while if $g_0 < 0$,
\begin{equation}\label{even p flip negative}
	\genleg{g_0}{p} = \genleg{2}{p}\genleg{-1}{p}\genleg{\tilde{g}}{p} = \left\{
\begin{array}{l l}
\genleg{\tilde{g}}{p} & \text{ if } p \equiv 1,3(8) \\
- \genleg{\tilde{g}}{p} & \text{ if } p \equiv 5,7(8).
\end{array} \right.
\end{equation}
It follows that when  $g_{0}\equiv 2(8)$,
\begin{align*}
\genleg{g_{0}}{p} = -1 & \Leftrightarrow \genleg{p}{\tilde{g}} = T_{2}(p),
\end{align*}
where for odd $n \in \mathbb{N}$,
\begin{equation}\label{T2}
T_{2}(n) := \left\{
\begin{array}{l l}
1 & \text{ if } n \equiv 3,5(8) \\
-1 & \text{ if } n \equiv 1,7(8).\\
\end{array} \right.
\end{equation}
Hence
\begin{equation}
\Delta_{d,g}(p) = \delta_{T_{2}(p),T_{2}(p+d)}(\tilde{g},d)(p).
\end{equation}
Similarly, for $g_{0} \equiv 6(8)$, we find that
\begin{align*}
\genleg{g_{0}}{p} = -1 & \Leftrightarrow \genleg{p}{\tilde{g}} = T_{3}(p),
\end{align*}
where
\begin{equation}\label{T3}
T_{3}(n) := \left\{
\begin{array}{l l}
1 & \text{ if } n \equiv 5,7(8) \\
-1 & \text{ if } n \equiv 1,3(8).\\
\end{array} \right.
\end{equation}
Hence
\begin{equation}\label{odd g residue}
\Delta_{d,g}(p) = \delta_{T_{3}(p),T_{3}(p+d)}(\tilde{g},d)(p),
\end{equation}
as desired.
\end{proof}

\subsection{Properties of $\Omega_{d,f}$}
Fix $ d \in 2\N$, and $f \in \N$ to be odd and square-free.  Define

\begin{equation}\label{omega constant}
\Omega_{d,f} (\varepsilon_{1},\varepsilon_{2}):=\textbf{e}_{f}(d)\sum_{a \in \Phi_{\varepsilon_{1},\varepsilon_{2}}(f, d)}K_{d,f}(a),
\end{equation}
where
\begin{equation}\label{Phi function}
\Phi_{\varepsilon_{1},\varepsilon_{2}}(f,d):= \bigg \{1 \leq  \alpha \leq f: \genleg{\alpha}{f} = \varepsilon_{1} \textnormal{ and } \genleg{\alpha+d}{f} = \varepsilon_{2}\bigg \},
\end{equation}
and $ \varepsilon_1, \varepsilon_2 \in \{-1,1\}.$\\
\\
Furthermore, if $1 \leq a \leq f$ is the unique solution to the congruence conditions

\begin{align}
	a & \equiv a_{1} \Mod{f_{1}}\nonumber \\
	a & \equiv a_{2} \Mod{f_{2}},
\end{align}
where $(f_{1},f_{2})=1$ and $f=f_{1}f_{2}$, we moreover define
\[\Omega_{a_{2}}^{f_{2}}(\varepsilon_{1},\varepsilon_{2}):=\textbf{e}_{f}(d)\sum_{a_{1} \in \Phi_{\varepsilon_{1},\varepsilon_{2}}(f_{1}, d)}K_{d, f}(a).\]
\begin{lemma}\label{omega relation}
\[\Omega_{a_{2}}^{f_{2}}(\varepsilon_{1},\varepsilon_{2}) = e_{f_{2}}(d)K_{d,f_{2}}(a_{2})\Omega_{d,f_{1}}(\epsilon).\]
\end{lemma}

\begin{proof}
Note that

\[e_{f}(d) = e_{f_{1}}(d)e_{f_{2}}(d)\]
and
\[K_{d,f}(a) = K_{d,f_{1}}(a_{1})K_{d,f_{2}}(a_{2}).\]
It follows that

\begin{align*}
\Omega_{a_{2}}^{f_{2}}(\varepsilon_{1},\varepsilon_{2})&= \sum_{\substack{a_{1} \in \Phi_{\epsilon}(f_{1},d)}}e_{f}(d)K_{d,f}(a)\\
&= e_{f_{2}}(d)K_{d,f_{2}}(a_{2})e_{f_{1}}(d)\sum_{a_{1} \in \Phi_{\epsilon}(f_{1},d)}K_{d,f_{1}}(a_{1})\\
&=e_{f_{2}}(d)K_{d,f_{2}}(a_{2})\Omega_{d,f_{1}}(\epsilon),
\end{align*}
as desired.
\end{proof}
The following lemma provides a simpler description of $\Omega_{d,f}(\varepsilon_{1},\varepsilon_{2})$ that is useful for numerical computations:

\begin{lemma} \label{lemma:coeff}
Fix $d \in 2\mathbb{N}$ and $f \in \mathbb{N}$ to be odd and square-free.  Then
\[
\Omega_{d,f}(\varepsilon_{1},\varepsilon_{2}) = \frac{\textbf{e}_{f}(d)}{4}\left(c(f)+\varepsilon_1c_1(f)+\varepsilon_2c_2(f)+\varepsilon_1\varepsilon_2c_{12}(f)\right),
\]
where $c, c_{1},c_{2},c_{12}$ are multiplicative functions defined on primes $q \in \mathbb{N}$ by
\begin{align*}
    c(q)& := \begin{cases}
		q-1-\frac{2}{q}+\frac{1}{q^2} &\text{ if }q \mid d\\        
        q-2-\frac{1}{q}&\text{ if }q \mid d \pm 1\\
        q-2-\frac{2}{q}&\text{ if }q \nmid d(d\pm 1),
    \end{cases}\\
    c_1(q)& := \begin{cases}
    -\frac{2}{q}+\frac{1}{q^2} &\text{ if }q \mid d\\  
        -1-\frac{1}{q}\genleg{1-d}{q}&\text{ if }q \mid d+1\\
        -\genleg{-d}{q}-\frac{1}{q}\left(1+\genleg{1-d}{q}\right)&\text{ if }q \nmid d(d+1),
    \end{cases}\\
    c_2(q)& := \begin{cases}
        -\frac{2}{q}+\frac{1}{q^2} &\text{ if }q \mid d\\ 
        -1-\frac{1}{q}\genleg{1+d}{q}&\text{ if }q \mid d-1\\
        -\genleg{d}{q}-\frac{1}{q}\left(1+\genleg{1+d}{q}\right)&\text{ if }q \nmid d(d-1),\\
    \end{cases}\\
    c_{12}(q)& :=\begin{cases}
        q-1-\frac{2}{q}+\frac{1}{q^{2}} &\text{ if }q \mid d\\ 
        -1-\frac{1}{q}\left(\genleg{1+d}{q}+\genleg{1-d}{q}\right) &\text{ if }q \nmid d.
        \end{cases}
\end{align*}

\end{lemma}

\begin{proof}
Note that

\[|\Phi_{\varepsilon_{1},\varepsilon_{2}}(f,d)| = \frac{1}{4}\sum\limits_{\substack{1 \leq \alpha \leq f\\(\alpha, f) = 1\\(\alpha+d, f) = 1}}\left(1+\varepsilon_1\genleg{\alpha}{f}\right)\left(1+\varepsilon_2\genleg{\alpha+d}{f}\right),\]
and therefore that
\begin{align*}
&\Omega_{d,f}(\varepsilon_{1},\varepsilon_{2}) =  \textbf{e}_{f}(d) \sum\limits_{a\in\Phi_{\varepsilon_{1},\varepsilon_{2}}(f,d)}\prod\limits_{p \mid (a-1, f)}\left(1-\frac{1}{p}\right)\prod\limits_{p \mid (a+d-1, f)}\left(1-\frac{1}{p}\right)\\
    &= \frac{\textbf{e}_{f}(d)}{4}\sum\limits_{\substack{1 \leq a \leq f\\(a, f) = 1\\(a+d, f) = 1}}\left(1+\varepsilon_1\genleg{a}{f}\right)\left(1+\varepsilon_2\genleg{a+d}{f}\right)\sum\limits_{k \mid (a-1,f)}\sum\limits_{l \mid (a+d-1, f)}\frac{\mu(k)}{k}\frac{\mu(l)}{l}\\
&= \frac{\textbf{e}_{f}(d)}{4}\sum\limits_{k, l \mid f}\frac{\mu(k)}{k}\frac{\mu(l)}{l}\sum\limits_{\substack{1 \leq a \leq f\\(a, f) = 1\\(a+d, f) = 1\\a\equiv 1\Mod{k}\\a\equiv 1-d\Mod{l}}}\left(1+\varepsilon_1\genleg{a}{f}+\varepsilon_2\genleg{a+d}{f}+\varepsilon_1\varepsilon_2\genleg{a}{f}\genleg{a+d}{f}\right).
\end{align*}
Split the inner sum into four parts, and then consider the case when $f=q$ is prime.  Employing that fact that
\begin{equation}\label{prime sum}
\sum\limits_{\substack{1 \leq \alpha \leq q\\(\alpha+d, q) = 1}}\genleg{\alpha}{q} = \sum\limits_{1 \leq \alpha \leq q}\genleg{\alpha}{q} - \genleg{-d}{q}\ = \left\{
\begin{array}{l l}
0 & \text{ if } q|d \\
-\genleg{-d}{q} & \text{ if } q \nmid d,
\end{array} \right.
\end{equation}
as well as that
\begin{align}\label{double prime sum}
\begin{split}
\sum\limits_{1 \leq \alpha \leq q}\genleg{\alpha}{q}\genleg{\alpha+d}{q}&=\sum\limits_{1 \leq \alpha < q}\genleg{\alpha^{2}}{q}\genleg{ \alpha^{-1}(\alpha+d)}{q}\\
&= \sum\limits_{1 \leq \alpha < q}\genleg{1+\alpha^{-1}d}{q} = \left\{
\begin{array}{l l}
q-1 & \text{ if } q|d, \\
-1 & \text{ if } q \nmid d,
\end{array} \right.
\end{split}
\end{align}
we find that the sums are equal to $c(q)$, $ c_1(q) $, $ c_2(q) $ and $c_{12}(q) $, respectively.  By CRT, we moreover find that for $f = f_{1}f_{2}$, where $(f_{1},f_{2}) = 1$,
\begin{align*}
\textbf{e}_{f}(d)\sum_{\substack{(a,f)=1}}K_{d,f}(a)=\textbf{e}_{f_{1}}(d)\textbf{e}_{f_{2}}(d)\sum_{\substack{(a_{1},f_{1})=1\\(a_{1}+d,f_{1})=1}}K_{d,f_{1}}(a_{1})\sum_{\substack{(a_{2},f_{2})=1\\(a_{2}+d,f_{2})=1}}K_{d,f_{2}}(a_{2}).
\end{align*}
In other words, upon extending $c(q)$ (and similarly $c_{1},c_{2}$ and $c_{12}$) to a multiplicative function on $\N$, we obtain the desired count.
\end{proof}

\subsection{Computing the Correction Factor}
For $g \in \mathcal{H}_{1}$, define

\begin{equation}\label{correction factor odd}
\textbf{c}_{g}(d) := 2\times \left\{
\begin{array}{l l l l l l l l}
2\cdot \Omega_{d,\tilde{g}}(-1,-1)& \text{ if } g_{0} \equiv 1(4)\\
\Omega_{d,\tilde{g}}(-1,-1)+\Omega_{d,\tilde{g}}(1,1)& \text{ if } g_0 \equiv 3(4) \textnormal{ and } d \equiv 0(4)\\
\Omega_{d,\tilde{g}}(-1,1)+\Omega_{d,\tilde{g}}(1,-1) & \text{ if } g_0 \equiv 3(4) \textnormal{ and } d \equiv 2(4)
\end{array} \right.
\end{equation}
when $g_{0}$ is odd, and
\begin{equation}\label{correction factor even}
\textbf{c}_{g}(d) :=  \left\{
\begin{array}{l l l l}
2 \left(\Omega_{d,\tilde{g}}(-1,-1)+\Omega_{d,\tilde{g}}(1,1)\right) & \text{ if } d \equiv 0(8)\\
2 \left(\Omega_{d,\tilde{g}}(-1,1)+\Omega_{d,\tilde{g}}(1,-1)\right) & \text{ if } d \equiv 4(8)\\
\Omega_{d,\tilde{g}}(-1,1)+\Omega_{d,\tilde{g}}(1,1)+\Omega_{d,\tilde{g}}(1,-1)+\Omega_{d,\tilde{g}}(-1,-1) & \text{ if } d \equiv 2,6(8)
\end{array} \right.
\end{equation}
when $g_{0}$ is even.\\
\\
Conjecture \ref{main conjecture} now follows from Conjecture \ref{E Conjecture}, together with the following Corollary:

\begin{cor}\label{A constant}
Fix $d \in 2\mathbb{N}$ and $g \in \mathcal{H}_{1}$.  If Conjecture \textnormal{\ref{conjBHstrong}} holds, then 
\[W_{d,g}(x) = \mathbf{A}(d)\textbf{c}_{g}(d)\mathfrak{S}(d) \textnormal{Li}_{2}(x)+O\left(\frac{x}{(\log x)^{C}}\right),\]
\end{cor}

\begin{proof}
By Lemma \ref{Residue Classes} and Corollary \ref{S constant}, we find that
\[W_{d,g}(x) = 4 c_{d,g}\cdot \textbf{A}(d)\mathfrak{S}(d) \textnormal{Li}_{2}(x)+O\left(\frac{x}{(\log x)^{C}}\right),\]
where
\begin{equation*}
c_{d,g} := \left\{
\begin{array}{l l l l l l l l}
\Omega_{1}^{2}(-1,-1)& \text{ if } g_{0} \equiv 1(4)\\
\sum_{t= 1,3}\Omega_{t}^{4}(T_{1}(t),T_{1}(t+d))& \text{ if } g_0 \equiv 3(4)\\
\sum_{t= 1,3,5,7}\Omega_{t}^{8}(T_{2}(t),T_{2}(t+d)) & \text{ if } g_{0} \equiv 2(8)\\
\sum_{t= 1,3,5,7}\Omega_{t}^{8}(T_{3}(t),T_{3}(t+d)) & \text{ if } g_{0} \equiv 6(8).
\end{array} \right.
\end{equation*}
The desired result now follows from Lemma \ref{omega relation}.
\end{proof}

\section{Determining Cases for Which $\textbf{c}_{g}(d)=0$}
\subsection{Computing $|\Phi_{\varepsilon_{1},\varepsilon_{2}}(\tilde{g},d)|$}

\begin{lemma}\label{Lemma1}
Fix $\tilde{g} \in \N_{> 1}$ to be odd and square-free, and write $\tilde{g} = q_1 q_{2}\dots q_r$, where the $q_{i}$'s are pairwise distinct prime numbers.  Then 
\[|\Phi_{\varepsilon_{1},\varepsilon_{2}}(\tilde{g},d)| = \frac{1}{4}\times \left\{
\begin{array}{l l}
\phantom{\bigg[}|\varphi_{d}(\tilde{g})|+(-1)^{r}\bigg(\genleg{-d}{\tilde{g}}\varepsilon_1+\genleg{d}{\tilde{g}}\varepsilon_2+\varepsilon_1\varepsilon_2\bigg)\phantom{\bigg[} & \text{ if } (\tilde{g},d) = 1 \\
\phantom{\bigg[} |\varphi_{d}(\tilde{g})|+(-1)^{r-s}\varepsilon_1\varepsilon_2\prod_{i=1}^{s}(q_{i}-1)\phantom{\bigg[} & \text{ if } (\tilde{g},d)\neq 1,
\end{array} \right.\]
where $(\tilde{g},d)= q_{1}\dots q_{s}$.
\end{lemma}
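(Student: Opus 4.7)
The plan is to reduce to local calculations at each prime divisor of $g$, extract all the structure from short character sums, and then reassemble globally by Chinese Remainder.

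\textbf{Step 1 (CRT reduction).} Since $g = q_{1}\cdots q_{r}$ is squarefree with distinct odd prime factors, the map $\alpha \mapsto (\alpha \bmod q_1, \dots, \alpha \bmod q_r)$ is a bijection, and the Jacobi symbol factors as $\genleg{\alpha}{g} = \prod_{i}\genleg{\alpha}{q_{i}}$. Thus if we set, for $\delta,\eta\in\{\pm1\}$,
\[
N_{q}(\delta,\eta) := \#\bigl\{1\le \alpha\le q :\ (\alpha,q)=(\alpha+d,q)=1,\ \genleg{\alpha}{q}=\delta,\ \genleg{\alpha+d}{q}=\eta\bigr\},
\]
then
\[
|\Phi_{\varepsilon_{1},\varepsilon_{2}}(g,d)| \;=\; \sum_{\substack{\vec\delta,\vec\eta\in\{\pm1\}^{r}\\ \prod \delta_{i}=\varepsilon_{1},\ \prod \eta_{i}=\varepsilon_{2}}}\;\prod_{i=1}^{r} N_{q_{i}}(\delta_{i},\eta_{i}).
\]

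\textbf{Step 2 (Local counts).} For each prime $q\mid g$, apply the identity $\mathbbm{1}[\genleg{\alpha}{q}=\delta] = (1+\delta \genleg{\alpha}{q})/2$ (valid when $(\alpha,q)=1$) to write
\[
N_{q}(\delta,\eta) \;=\; \frac{1}{4}\Bigl[\, a_{q} \;+\; \delta\, b_{q} \;+\; \eta\, c_{q} \;+\; \delta\eta\, e_{q}\,\Bigr],
\]
where $a_q,b_q,c_q,e_q$ are the four character sums over the residues $\alpha\pmod q$ with $\alpha,\alpha+d\not\equiv 0$. Using $\sum_{\alpha\bmod q}\genleg{\alpha}{q}=0$ together with the classical evaluation $\sum_{\beta\bmod q}\genleg{\beta^{2}-c}{q}=-1$ for $c\not\equiv 0\pmod q$ (obtained after completing the square in $\alpha(\alpha+d)$), I will get:
\[
q\nmid d:\quad a_{q}=q-2,\ \ b_{q}=-\genleg{-d}{q},\ \ c_{q}=-\genleg{d}{q},\ \ e_{q}=-1,
\]
\[
q\mid d:\quad a_{q}=q-1,\ \ b_{q}=0,\ \ c_{q}=0,\ \ e_{q}=q-1.
\]
The $q\mid d$ case is slightly special because $(\alpha+d)\equiv \alpha\pmod q$, and the sums for $b_{q},c_{q}$ vanish as character sums over the units.

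\textbf{Step 3 (Global assembly).} To enforce the product constraints $\prod\delta_{i}=\varepsilon_{1}$ and $\prod\eta_{i}=\varepsilon_{2}$, I insert the indicator $\mathbbm{1}[\prod \delta_{i}=\varepsilon] = (1+\varepsilon\prod \delta_{i})/2$. The double sum then factorizes over $i$, and because each local polynomial $P_{i}(\delta,\eta)=a_{i}+b_{i}\delta+c_{i}\eta+e_{i}\delta\eta$ satisfies
\[
\sum_{\delta,\eta}P_{i}=4a_{i},\ \ \sum_{\delta,\eta}\delta P_{i}=4b_{i},\ \ \sum_{\delta,\eta}\eta P_{i}=4c_{i},\ \ \sum_{\delta,\eta}\delta\eta P_{i}=4e_{i},
\]
the powers of $4$ collapse and one obtains the master formula
\[
|\Phi_{\varepsilon_{1},\varepsilon_{2}}(g,d)| \;=\; \frac{1}{4}\Bigl[\,\prod_{i} a_{q_{i}} \;+\; \varepsilon_{1}\prod_{i} b_{q_{i}} \;+\; \varepsilon_{2}\prod_{i} c_{q_{i}} \;+\; \varepsilon_{1}\varepsilon_{2}\prod_{i} e_{q_{i}}\,\Bigr].
\]

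\textbf{Step 4 (Case analysis).} Substituting the local data from Step~2 and using multiplicativity of the Jacobi symbol: if $(g,d)=1$, every $q_{i}\nmid d$, so $\prod a_{q_{i}}=|\varphi_{d}(g)|$, $\prod b_{q_{i}}=(-1)^{r}\genleg{-d}{g}$, $\prod c_{q_{i}}=(-1)^{r}\genleg{d}{g}$, $\prod e_{q_{i}}=(-1)^{r}$, which yields the first line of the lemma. If instead $(g,d)=q_{1}\cdots q_{s}\ne 1$, then $b_{q_{i}}=c_{q_{i}}=0$ for $i\le s$, so $\prod b_{q_{i}}=\prod c_{q_{i}}=0$, while $\prod a_{q_{i}}=|\varphi_{d}(g)|$ and $\prod e_{q_{i}}=(-1)^{r-s}\prod_{i\le s}(q_{i}-1)$, giving the second line.

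The main obstacle is the bookkeeping in Step~2, in particular the evaluation of $\sum_{\alpha\bmod q}\genleg{\alpha(\alpha+d)}{q}$ via completing the square, and carefully handling the coprimality condition at primes $q\mid d$ where the two Jacobi symbols coincide. Once that local calculation is in hand, the CRT assembly is purely formal.
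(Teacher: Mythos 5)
Your proof is correct and takes essentially the same approach as the paper: both rest on expanding the indicator via $\frac14\left(1+\varepsilon_1\genleg{\alpha}{g}\right)\left(1+\varepsilon_2\genleg{\alpha+d}{g}\right)$ and on the same three prime-level character sums, namely $\sum_\alpha\genleg{\alpha}{q}=-\genleg{-d}{q}$ and $\sum_\alpha\genleg{\alpha+d}{q}=-\genleg{d}{q}$ for $q\nmid d$ (vanishing when $q\mid d$), together with $\sum_\alpha\genleg{\alpha(\alpha+d)}{q}$ equal to $-1$ or $q-1$. The only difference is organizational: the paper expands globally over $\alpha$ modulo $g$ and then factors the four resulting sums by multiplicativity, whereas you split by CRT first and reassemble the local Fourier coefficients, arriving at the identical master formula and case analysis.
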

\begin{proof}
Note that
\begin{align}
\nonumber
&4\cdot |\Phi_{\varepsilon_{1},\varepsilon_{2}}(\tilde{g},d)| = \sum\limits_{\substack{1 \leq \alpha \leq \tilde{g}\\(\alpha, \tilde{g}) = 1\\(\alpha+d, \tilde{g}) = 1}}\left(1+\varepsilon_1\genleg{\alpha}{\tilde{g}}\right)\left(1+\varepsilon_2\genleg{\alpha+d}{\tilde{g}}\right)\\
    \label{eq4sums}
    & = |\varphi_{d}(\tilde{g})|+\varepsilon_1\sum\limits_{\substack{1 \leq \alpha \leq \tilde{g}\\(\alpha+d, \tilde{g}) = 1}}\genleg{\alpha}{\tilde{g}}+\varepsilon_2\sum\limits_{\substack{1 \leq \alpha \leq \tilde{g}\\(\alpha, \tilde{g}) = 1}}\genleg{\alpha+d}{\tilde{g}}+\varepsilon_1\varepsilon_2\sum\limits_{1 \leq \alpha \leq \tilde{g}}\genleg{\alpha}{\tilde{g}}\genleg{\alpha+d}{\tilde{g}}.
\end{align}

By (\ref{prime sum}) and multiplicativity, it follows that

\begin{equation}\label{sum 2}
\sum\limits_{\substack{1 \leq \alpha \leq \tilde{g}\\(\alpha+d, \tilde{g}) = 1}}\genleg{\alpha}{\tilde{g}}  = \left\{
\begin{array}{l l}
0 & \text{ if } (\tilde{g},d)\neq 1, \\
(-1)^{r}\genleg{-d}{\tilde{g}} & \text{ if } (\tilde{g}, d)=1,
\end{array} \right.
\end{equation}
and similarly that 
\begin{equation}\label{sum 3}
\sum\limits_{\substack{1 \leq \alpha \leq \tilde{g}\\(\alpha, \tilde{g}) = 1}}\genleg{\alpha+d}{\tilde{g}}  = \left\{
\begin{array}{l l}
0 & \text{ if } (\tilde{g},d)\neq 1, \\
(-1)^{r}\genleg{d}{\tilde{g}} & \text{ if } (\tilde{g}, d)=1.
\end{array} \right.
\end{equation}
By (\ref{double prime sum}) and multiplicativity, we moreover obtain
\begin{equation}\label{sum 4}
\sum\limits_{1 \leq \alpha \leq \tilde{g}}\genleg{\alpha}{\tilde{g}}\genleg{\alpha+d}{\tilde{g}}= (-1)^{r-s}\prod_{i=1}^{s}(q_{i}-1).
\end{equation}
Lemma \ref{Lemma1} now follows upon inserting (\ref{sum 2}), (\ref{sum 3}), and (\ref{sum 4}), into (\ref{eq4sums}).
\end{proof}

We are particularly interested in determining the cases for which $|\Phi_{\varepsilon_{1},\varepsilon_{2}}(\tilde{g},d)| = 0$.
\begin{lemma} \label{zerocases}
Fix $d \in 2\mathbb{N}$ and $\tilde{g} \in \N_{> 1}$ to be odd and square-free. Then
\begin{enumerate}
\item $|\Phi_{-1,-1}(\tilde{g},d)|=0$ if and only if either
\begin{itemize}
\item $\tilde{g}=5$ and $d\equiv2,3\pmod{5}$, or
\item $\tilde{g}\nmid d$ and $\tilde{g}|3d$
\end{itemize}
\item $|\Phi_{1,1}(\tilde{g},d)|=0$ if and only if
\begin{itemize}
\item $\tilde{g}=5$ and $d\equiv 1,4 \pmod 5$, or
\item $\tilde{g}\nmid d$ and $\tilde{g}|3d$,
\end{itemize}
\item $|\Phi_{-1,1}(\tilde{g},d)|=0$ if and only if $\tilde{g}|d$,
\item $|\Phi_{1,-1}(\tilde{g},d)|=0$ if and only if $\tilde{g}|d$.
\end{enumerate}
\end{lemma}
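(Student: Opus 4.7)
The plan is to apply Lemma \ref{Lemma1} and carry out a case analysis on $(g,d)$, since the two formulas in Lemma \ref{Lemma1} are structurally quite different.

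I would first treat the non-coprime case $(g,d) = q_1 \cdots q_s \neq 1$. Setting the corresponding formula equal to zero and dividing through by $\prod_{i=1}^s(q_i-1)$, which is positive, the vanishing condition reduces to
\[\prod_{i=s+1}^{r}(q_i - 2) = -(-1)^{r-s}\varepsilon_1\varepsilon_2.\]
Since each $q_i \geq 3$, the left-hand side is a positive integer, forcing it to equal $+1$. This leaves only two possibilities: either $r = s$ (empty product), i.e.\ $g \mid d$, or $r = s+1$ with $q_{s+1} = 3$, i.e.\ $g \mid 3d$ and $g \nmid d$. The resulting sign constraint on $(-1)^{r-s}\varepsilon_1\varepsilon_2$ then determines which $(\varepsilon_1,\varepsilon_2)$ actually produce vanishing: $g \mid d$ requires $\varepsilon_1\varepsilon_2 = -1$, feeding into cases (3) and (4), while $g \mid 3d,\ g \nmid d$ requires $\varepsilon_1\varepsilon_2 = +1$, feeding into cases (1) and (2).

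Next I would handle the coprime case $(g,d) = 1$. The vanishing condition becomes
\[\prod_{i=1}^{r}(q_i - 2) = (-1)^{r+1}\left(\genleg{-d}{g}\varepsilon_1 + \genleg{d}{g}\varepsilon_2 + \varepsilon_1\varepsilon_2\right),\]
whose right-hand side has absolute value at most $3$. Consequently $\prod(q_i-2) \leq 3$, which, together with squarefreeness, restricts $g$ to the four possibilities $\{1, 3, 5, 15\}$. For each such $g$ I would substitute the value of $\genleg{-1}{g}$, namely $-1$ for $g\in\{3,15\}$ and $+1$ for $g=5$, evaluate $\genleg{d}{g}$ and $\genleg{-d}{g} = \genleg{-1}{g}\genleg{d}{g}$ as a function of $d \bmod g$, and solve the resulting sign equation for each $(\varepsilon_1,\varepsilon_2) \in \{\pm1\}^2$.

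Finally I would merge the two analyses into the four equivalences (1)--(4). I expect the main bookkeeping to lie in the $g = 5$ subcase, which must produce precisely the conditions $d \equiv 2,3 \pmod 5$ in (1) and $d \equiv 1,4 \pmod 5$ in (2), and in verifying that the $g = 3$ coprime subcase fuses seamlessly with the $g \mid 3d,\ g \nmid d$ branch coming from the non-coprime analysis, so that the three disparate sources of vanishing in (1) (respectively in (2)) assemble into the clean trichotomy stated in the lemma.
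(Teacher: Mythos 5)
Your plan follows the same route as the paper: apply Lemma~\ref{Lemma1} and split on whether $(g,d)=1$. Your non-coprime analysis is correct and complete, and coincides with the paper's (vanishing forces $\prod_{i>s}(q_i-2)=1$, hence either $g\mid d$ with $\varepsilon_1\varepsilon_2=-1$, or $g\mid 3d$, $g\nmid d$ with $\varepsilon_1\varepsilon_2=+1$). Your reduction of the coprime case to $g\in\{1,3,5,15\}$ is also sound, and is in fact more systematic than the paper, which at that point merely asserts the list of solutions.

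The gap is in the step you deferred: solving the coprime sign equations for the mixed cases $(\varepsilon_1,\varepsilon_2)=(-1,1)$ and $(1,-1)$ and ``merging'' them into claims (3) and (4). That computation does not confirm the lemma --- it refutes it. For $(\varepsilon_1,\varepsilon_2)=(-1,1)$ and $(g,d)=1$, Lemma~\ref{Lemma1} gives
\[
4\,|\Phi_{-1,1}(g,d)| = |\varphi_d(g)|+(-1)^r\left(-\genleg{-d}{g}+\genleg{d}{g}-1\right),
\]
and since $\genleg{-1}{3}=\genleg{-1}{15}=-1$, the right-hand side equals $2-2\genleg{d}{3}$ for $g=3$ and $2+2\genleg{d}{15}$ for $g=15$. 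Hence $|\Phi_{-1,1}(3,d)|=0$ whenever $d\equiv 1\pmod 3$, and $|\Phi_{-1,1}(15,d)|=0$ whenever $\genleg{d}{15}=-1$, even though $g\nmid d$ in both situations. A direct check confirms this: in $\Phi_{-1,1}(3,4)$ the condition $\genleg{\alpha}{3}=-1$ forces $\alpha=2$, but then $\genleg{\alpha+4}{3}=\genleg{6}{3}=0\neq 1$, so $\Phi_{-1,1}(3,4)=\emptyset$ although $3\nmid 4$; symmetrically $\Phi_{1,-1}(3,2)=\emptyset$ for part (4). So parts (3) and (4) are false as stated, and your promised ``clean trichotomy'' cannot materialize; only parts (1) and (2) survive the analysis (for equal signs the extra candidates $g=15$, resp.\ $g=1$, drop out). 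You should know that the paper's own proof reaches ``which cannot occur'' only through a sign error: with $\varepsilon_1\varepsilon_2=-1$ the bracket in the coprime case must read $-1-\genleg{-d}{g}+\genleg{d}{g}$, whereas the paper writes $1-\genleg{-d}{g}+\genleg{d}{g}$, and only with that incorrect $+1$ is vanishing impossible. The corrected version of (3) is: $|\Phi_{-1,1}(g,d)|=0$ iff $g\mid d$, or $g=3$ and $d\equiv 1\pmod 3$, or $g=15$, $(d,15)=1$ and $\genleg{d}{15}=-1$ (similarly for (4) with $d\equiv 2\pmod 3$, resp.\ $\genleg{d}{15}=+1$); consequently every later use of parts (3), (4), in particular in Theorem~\ref{Finite Theorem}, needs to be rechecked against this corrected classification.
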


\begin{proof}
As above, write $\tilde{g}=q_1\ldots q_r$, where $q_i$ are pairwise distinct prime numbers, and let $(\tilde{g},d)=q_1\ldots q_s$.\\
\\
1) If $(\tilde{g},d)=1$, then by Lemma \ref{Lemma1},
\[|\Phi_{-1,-1}(\tilde{g},d)|=0 \Leftrightarrow |\varphi_{d}(\tilde{g})|+(-1)^{r}\bigg(1-\genleg{-d}{\tilde{g}}-\genleg{d}{\tilde{g}}\bigg) = 0,\]
which occurs if and only if, either: $i)$: $\tilde{g} = 5$ and $d \equiv 2,3$ (mod $5$), or $ii)$: $\tilde{g} = 3$ and $d \equiv 1,2$ (mod $3$).  If $(\tilde{g},d) >1$, then 
\[|\Phi_{-1,-1}(\tilde{g},d)| = 0 \Leftrightarrow |\varphi_{d}(\tilde{g})|+(-1)^{r-s}\prod_{i=1}^{s}(q_{i}-1) = 0,\]
which occurs if only if $r = s+1$ and $q_r = 3$. Since $\tilde{g}$ is square-free, this is equivalent to $\tilde{g}|3d \wedge \tilde{g} \nmid d$.\\
\\
2) If $(\tilde{g},d)=1$, then

\[|\Phi_{1,1}(\tilde{g},d)| =0 \Leftrightarrow |\varphi_{d}(\tilde{g})|+(-1)^{r}\bigg(1+\genleg{-d}{\tilde{g}}+\genleg{d}{\tilde{g}}\bigg) = 0,\]
which occurs if and only if either: $i)$: $\tilde{g} = 3$ and $3 \nmid d$, or $ii)$: $\tilde{g}=5$ and $d\equiv 1,4$ (mod 5).  If $(\tilde{g},d)>1$, then 
\[|\Phi_{1,1}(\tilde{g},d)| = 0 \Leftrightarrow |\varphi_{d}(\tilde{g})|+(-1)^{r-s}\prod_{i=1}^{s}(q_{i}-1) = 0,\]
which, as before, occurs if and only if $r = s+1$ and $q_r = 3$.\\
\\
3) and 4) If $(\tilde{g},d)=1$, then 

\[|\Phi_{-1,1}(\tilde{g},d)| = 0 \Leftrightarrow |\varphi_{d}(\tilde{g})|+(-1)^{r}\bigg(1-\genleg{-d}{\tilde{g}}+\genleg{d}{\tilde{g}}\bigg) = 0,\]
which cannot occur.  Similarly, we find that $|\Phi_{1,-1}(\tilde{g},d)| \neq 0$.\\
\\
If $(\tilde{g},d) > 1$, then
\[|\Phi_{-1,1}(\tilde{g},d)| = 0 \Leftrightarrow  |\varphi_{d}(\tilde{g})|-(-1)^{r-s}\prod_{i=1}^{s}(q_{i}-1) = 0,\]
which occurs when $r = s$, i.e. when $\tilde{g}|d$.  Similarly with $\Phi_{1,-1}(\tilde{g},d)$.
\end{proof}
\subsection{{Proof of Theorem \ref{Finite Theorem}}}
\begin{proof}
We wish to identify the instances in which $\textbf{c}_{g}(d)=0$.  When $g \equiv 1(4)$,
\begin{align*}
\textbf{c}_{g}(d) &= \Omega_{d,\tilde{g}}(-1,-1)\\
&=\textbf{e}_{\tilde{g}}(d)\sum_{a \in \Phi_{-1,-1}(\tilde{g}, d)}K_{d,\tilde{g}}(a),
\end{align*}
and therefore since $\textbf{e}_{\tilde{g}}(d), K_{d,\tilde{g}}(a) > 0,$
\[\textbf{c}_{g}(d) = 0 \Leftrightarrow |\Phi_{-1,-1}(\tilde{g}, d)| = 0.\]
Note that by Lemma \ref{Residue Classes}, $\Delta_{d,g}: \pi_{d}\to \{0,1\}$ is supported on the set

\begin{align*}
\{p \in \pi_{d}: \Delta_{d,g}(p) \neq 0 \}&= \{p \in \pi_{d}: (p,g_{s})=(p+d,g_{s})=1 \wedge \delta_{-1,-1}(\tilde{g},d)(p) =1 \}\\
&= \bigcup_{a \in \Phi_{-1,-1}(\tilde{g},d)} \{(p,g_{s})=(p+d,g_{s})=1 \wedge p \equiv a \Mod{\tilde{g}}\}.
\end{align*}
In other words, $\textbf{c}_{g}(d) = 0$ implies that $\Delta_{d,g}$ is supported on the empty set.  Since $p \in \pi_{d,g} \Rightarrow \Delta_{d,g}(p)= 1$, it further follows that $\pi_{d,g}$ is empty whenever $\textbf{c}_{g}(d) = 0$.  The other cases follow similarly, and we conclude that $\textbf{c}_{g}(d) = 0$ if and only if one of the following cases holds:
\begin{itemize}
\item $g_{0} \equiv 1$ (mod $4$) and $|\Phi_{-1,-1}(\tilde{g},d)| = 0$,
\item $g_{0} \equiv 3$ (mod $4$), 
\begin{itemize}
\item $d \equiv 0$ (mod $4$) and $|\Phi_{\pm 1,\pm 1}(\tilde{g},d)| = 0$,
\item $d \equiv 2$ (mod $4$) and $|\Phi_{\pm 1,\mp 1}(\tilde{g},d)|= 0$, 
\end{itemize} 
\item $g_0 \equiv 2$ (mod $4$),
\begin{itemize}
\item $d \equiv 0$ (mod $8$) and $|\Phi_{\pm 1,\pm 1}(\tilde{g},d)| = 0$, 
\item $d \equiv 2,6$ (mod $8$) and $|\Phi_{\pm 1,\pm 1}(\tilde{g},d)|=|\Phi_{\pm 1,\mp 1}(\tilde{g},d)|=0$,
\item $d \equiv 4$ (mod $8$) and $|\Phi_{\pm 1,\mp 1}(\tilde{g},d)| = 0$.
\end{itemize} 
\end{itemize} 
Theorem \ref{Finite Theorem} now follows from Lemma \ref{zerocases}.
\end{proof}

\appendix

\section{Obtaining Numerical Evidence for Conjecture \ref{main conjecture}}

The following Mathematica code is used to generate the set of primes $p \in \pi_{d,g}$ up to height $10^9$, for the particular case $d = 2$ and $g = 10$. To begin, we generate a list of primes $ p \leq 10^9$ (`$\verb|primes|$'), and then selecting those for which $ p-d $ is also prime (`$\verb|primePairs|$'). Next, we select the subset of primes for which $ g $ is a primitive root modulo both $ p $ and $p-d$ (`$\verb|primePairsArtin|$'). The list is subsequently exported into a file. 
\begin{verbatim}
d = 2;
g = 10;
primes = Table[Prime[n], {n, PrimePi[10^9]}];
primePairs = Select[primes, PrimeQ[# - d] &];
primePairsArtin = Select[primePairs, 
  MultiplicativeOrder[g, #] == # - 1 && 
    MultiplicativeOrder[g, # - d] == # - d - 1 &];
Export["primePairsArtin", primePairsArtin, "List"]
\end{verbatim}
To generate the plot found in Figure 1, we first divide the primes in the above list into bins of size $10^5$ (`$\verb|primePairsBins|$'), which we then use to count primes up to size $k \cdot 10^5$, for each $k = 1, 2, \dots$ ((`$\verb|primePairsCounts|$')). Next, we plot pairs of the form $ (k\cdot 10^5, \verb|primePairsCounts[[k]]|),$ where $\verb|primePairsCounts[[k]]|$ denotes the number of primes $p \in \pi_{d,g}$ up to height $k\cdot 10^5$. The plot is then graphed alongside $\verb|const|\cdot\Li_2(x) $, where $\verb|const| = 0.167606 \approx \textbf{c}_{10}(2)\mathbf{A}(2)\mathfrak{S}(2)$ has been manually inserted.
\begin{verbatim}
primePairsArtin = Import["primePairsArtin", "List"];
xMax = 10^9;
step = 10^5;
primePairsBins = BinCounts[primePairsArtin, step];
primePairsCounts = Accumulate[primePairsBins];
kMax = Length[primePairsCounts];
primePairsPlot = Table[{k*step, primePairsCounts[[k]]}, {k, kMax}]
plotPrimes = ListPlot[primePairsPlot, PlotStyle -> Blue];
LogInt2[x_] := NIntegrate[1/Log[s]^2, {s, 2, x}] ;
const = 0.167606;
plotEstimate = 
  Plot[const*LogInt2[x], {x, 2, xMax},
  PlotStyle -> Red, 
  PlotRange -> {{0, xMax}, Automatic}, AxesLabel -> {"x"}, 
  PlotLegends -> LineLegend[{Blue, Red},
    {"pi_{2, 10}(x)", "C(2, 10)Li_2(x)"}]];
P = Show[plotPrimes, plotEstimate]
\end{verbatim}
All the computations were performed on a Windows 10 laptop with an Intel i5-7200U processor and 8 GB RAM.

\end{document}